\documentclass[12pt,reqno]{amsart}

\usepackage{amssymb,latexsym}

\usepackage{enumerate}
\allowdisplaybreaks
\usepackage[french,english]{babel}
\usepackage{amsmath}
\usepackage{graphicx}
\usepackage{amssymb}
\usepackage{bbm}
\usepackage{amsthm,mathtools}
\usepackage{ulem}
\usepackage{geometry}
\usepackage{tikz-cd}
\usepackage{mathrsfs}
\usepackage[colorinlistoftodos]{todonotes}
\usepackage{enumitem}
\usepackage{verbatim}
\usepackage[foot]{amsaddr}
\usepackage{dsfont}
\usepackage{cite}
\usepackage[T1]{fontenc}

\makeatletter

\@namedef{subjclassname@2010}{
	
	\textup{2020} Mathematics Subject Classification}

\makeatother
\newtheorem{thm}{Theorem}[section]
\newtheorem*{thm*}{Theorem}

\newtheorem{lem}[thm]{Lemma}

\theoremstyle{definition}

\numberwithin{equation}{section}

\newcommand{\bg}{\big}

\newcommand{\bgg}{\bigg}

\newcommand{\Bg}{\Big}

\newcommand{\Bgg}{\Bigg}

\newcommand{\lo}{\log_2}
\newcommand{\lt}{\log_3}
\newcommand{\inv}{^{-1}}

\newcommand{\mbc}{\mathbb{C}}

\newcommand{\mbr}{\mathbb{R}}

\newcommand{\mbq}{\mathbb{Q}}

\newcommand{\mch}{\mathcal{H}}

\newcommand{\mcn}{\mathcal{N}}

\newcommand{\mcm}{\mathcal{M}}

\newcommand{\mmd}{\mathrm{d}}
\newcommand{\mme}{\mathrm{e}}
\newcommand{\mmi}{\mathrm{i}}
\newcommand{\mit}{\mathrm{i}t}
\newcommand{\mdt}{\mathrm{d}t}
\newcommand{\re}{\operatorname{Re}}
\newcommand{\im}{\operatorname{Im}}

\newcommand{\whp}{\widehat{\Phi}}

\usepackage{hyperref}
\hypersetup{colorlinks=true,linkcolor=blue,anchorcolor=blue,citecolor=blue}
\usepackage{color}
\newcommand{\newabstract}[1]{%
	\par\bigskip
	\csname otherlanguage*\endcsname{#1}%
	\csname captions#1\endcsname
	\item[\hskip\labelsep\scshape\abstractname.]
}

\begin{document}

	\baselineskip=17pt

	\title[Q. Yang, S. Zhao and G.-L. Zhou]{Large values of the Hurwitz zeta function with rational parameter}

	\author{Qiyu Yang\textsuperscript{1}}
    \author{Shengbo Zhao\textsuperscript{2}}
    \author{Guang-Liang Zhou\textsuperscript{3}}
	\address{1. School of Mathematics and Statistics, Henan Normal University, Xinxiang 453007, CHINA}
	\address{2. School of Mathematical Sciences, Key Laboratory of Intelligent Computing and Applications (Tongji University), Ministry of Education, Tongji University, Shanghai 200092, China}
    \address{3. Department of Applied Mathematics, Nanjing Forestry University, Nanjing 210037, People's Republic of China}
	\email{qyyang.must@gmail.com}
	\email{shengbozhao@hotmail.com}
    \email{guangliangzhou@126.com}

	\begin{abstract} 
	   In this paper, we establish lower bounds for large values of the Hurwitz zeta function with rational parameter when the real part \(\sigma \in [1/2,1]\). These results improve the earlier results of Ramachandra and Sankaranarayanan in 1989. On the critical line, our result recovers the corresponding lower bound of de la Bretèche and Tenenbaum (2019) for the Riemann zeta function, while for \(1/2<\sigma\le 1\), our lower bounds attain the same order as the corresponding lower bounds for large values of the Riemann zeta function. Our proofs are based on the resonance method.
	\end{abstract}
	
    \keywords{Large values, the Hurwitz zeta function, the resonance method, GCD sums, smooth numbers}
	
	\subjclass[2020]{Primary 11M35, 11M06, 11N37.}
	
	\maketitle

\section{Introduction}
\label{sec-int}

The Hurwitz zeta function is one of the classical and most natural generalizations of the Riemann zeta function. The presence of the shift parameter makes it a natural object for investigating how the analytic behavior of a zeta function depends on the arithmetic nature of the parameter. Let \(0<\alpha\leq 1\) and \(s=\sigma+\mit\). For \(\sigma>1\), the Hurwitz zeta function is defined by
\[\zeta(s,\alpha)=\sum_{n=0}^\infty \frac{1}{(n+\alpha)^s}.\]
It admits a meromorphic continuation to the whole complex plane, with a unique simple pole at \(s=1\) of residue \(1\), and the Riemann zeta function is recovered by taking \(\alpha=1\). Moreover, we have \(\zeta(s,1/2) = (2^s-1)\zeta(s)\).

The arithmetic nature of \(\alpha\) leads to an important distinction. When \(\alpha=a/q\in(0,1)\) is rational and reduced, the orthogonality of Dirichlet characters gives, for \(\sigma>1\),
\begin{align}
    \label{fundamental-decomposition}
        \zeta\Bg(s,\frac{a}{q}\Bg) =q^s \sum_{\substack{n \ge 1 \\n \equiv a \pmod q}}\frac{1}{n^s} = \frac{q^s}{\phi(q)}\sum_{\chi \pmod q} \overline{\chi(a)}L(s,\chi),
\end{align}
where \(\phi(n)\) denotes Euler’s totient function. By meromorphic continuation, \eqref{fundamental-decomposition} holds throughout the whole complex plane . Thus, the Hurwitz zeta function with rational parameter is closely connected with Dirichlet \(L\)-functions. No analogous finite decomposition is available for a general irrational parameter. This difference is reflected in several aspects of the value distribution of \(\zeta(s,\alpha)\), and is particularly relevant to the study of its large values.

Motivated by \cite{Balasubrmanian1977PIAS}, Ramachandra and Sankaranarayanan \cite{Ramachandra1989ArchMath} established lower bounds for large values of the Hurwitz zeta function both for rational parameters and for a certain class of irrational parameters. More precisely, let \(a\) and \(q\) be positive integers of the same order, not exceeding \((\lo H)^B\) where \(B>1\) is fixed, and suppose that \(1000\lo T \le H \le T\), Ramachandra and Sankaranarayanan \cite[Theorem 3]{Ramachandra1989ArchMath} proved that
\begin{align}
    \label{RS89-critical-line}
    \max_{T \le t \le T+H}\Bg|\zeta\Bg(\frac{1}{2}+\mit,\frac{a}{q} \Bg)\Bg| > \exp \Bg(\frac{1}{17\mme^4} \Bg(\frac{\log H}{\phi(q)(\log\phi(q)+\lo H)}\Bg)^{1/2}\Bg).
\end{align}
Here and throughout this paper, we write \(\log_j\) for the \(j\)-th iterated logarithm, such as \(\lo x :=\log\log x\) and \(\lt x := \log\log\log x\).
For \(1/2<\sigma<1\), under the same assumptions \cite[Theorem 4]{Ramachandra1989ArchMath} gives 
\begin{align}
    \label{RS89-critical-strip}
    \max_{T \le t \le T+H}\Bg|\zeta\Bg(\sigma+\mit,\frac{a}{q} \Bg)\Bg| >\exp \Bg(\frac{(\log H)^{1-\sigma}}{9(\phi(q))^\sigma(\log\phi(q)+\lo H)}\Bg).
\end{align}
Furthermore, on the \(1\)-line, if \(1000\lt T \le H \le T\) and \(\phi(q) \le \lt H\), then \cite[Theorem 5]{Ramachandra1989ArchMath} shows that
\begin{align}
    \label{RS89-1-line}
    \max_{T \le t \le T+H}\Bg|\zeta\Bg(1+\mit,\frac{a}{q} \Bg)\Bg| > \frac{1}{1+2\mme^2}(\lo H)^{1/\phi(q)}-1.
\end{align}
They also investigated irrational parameters in the same paper, but their argument does not apply to arbitrary irrational shifts. Instead, they first establish a comparison principle between Hurwitz zeta functions with nearby parameters and then consider specially constructed irrational numbers admitting extraordinarily good rational approximations. The denominators of the corresponding rational approximations are chosen to grow rapidly, through iterated exponential constructions, so that the rational parameter estimates can be transferred to the limiting irrational parameter. In this way, \cite[Theorems 8-10]{Ramachandra1989ArchMath} yield large values on the critical line, in the critical strip, and on the \(1\)-line for this special class of irrational parameters. The authors explicitly point out that the case of a general irrational parameter appears considerably more difficult.

In a subsequent paper, Ramachandra and Sankaranarayanan \cite{Ramachandra1991AA} obtained more general large values by a modification of Montgomery's method. Specifically, let \(\sigma \in [1/2,1)\), \(\theta \in[0,2\pi)\) and \(\varepsilon>0\). For sufficiently large \(T\), they show that there exists
\[\frac{1}{2}T^\varepsilon \le t \le \frac{3}{2}T\]
such that
\[\re\bg(\mme^{-\mmi\theta}\zeta(\sigma+\mit,\alpha)\bg) \ge \frac{c}{1-\sigma}(\log t)^{1-\sigma},\]
where \(c>0\) is the constant specified in \cite[Theorem 1]{Ramachandra1991AA}. Furthermore, \cite[Theorem 2]{Ramachandra1991AA} yields that for \(\varepsilon_1>0\), a \(t\) in the same range such that
\[\re\bg(\mme^{-\mmi\theta}\zeta(1+\mit,\alpha)\bg) \ge\Bg(\frac{1}{2}\cos^2\frac{\theta}{2}-\varepsilon_1\Bg)\lo t.\]
These results apply without the special rapidly convergent rational approximation imposed in \cite{Ramachandra1989ArchMath}, although lower bounds are correspondingly of a different scale from those obtained there for rational parameters. 

More recently, the value distribution of the Hurwitz zeta function has again attracted attention, especially on the critical line. Sahay \cite{Sahay2023MPCambridgePS} studied the moments
\[M_k(T,\alpha):=\int_T^{2T} \Bg|\zeta\Bg(\frac{1}{2}+\mit,\alpha\Bg)\Bg|^{2k} \mmd t\]
for \(\alpha \in \mbq\). In analogy with the Riemann zeta function, he conjectured an asymptotic formula of the form
\[M_k(T,\alpha)\sim c_k(\alpha)T(\log T)^{k^2},\]
and established the corresponding results for \(k=1, 2\). The rationality of the shift plays a crucial role in the argument, since the Hurwitz zeta function can be expressed as a finite linear combination of Dirichlet \(L\)-functions via \eqref{fundamental-decomposition}.

The irrational case exhibits a markedly different behavior. Heap and Sahay \cite{Heap2025Crelle} proved a sharp upper bound for the fourth moment when \(\alpha\) is irrational with irrationality exponent strictly less than \(3\). As a consequence, they determined, for \(0\leq k\leq2\),
\[M_k(T,\alpha) \asymp T(\log T)^k.\]
Together with the results for rational parameters, this shows that the moment behavior of the Hurwitz zeta function can vary substantially with the arithmetic nature of the shift parameter.

These developments provide additional motivation for studying large values of the Hurwitz zeta function itself. In particular, for rational parameters, the connection with Dirichlet \(L\)-functions makes it natural to ask whether the modern methods developed for large values of the Riemann zeta function and Dirichlet \(L\)-functions can yield substantially stronger bounds than the classical large values above. This is the problem considered in the present paper.

Furthermore, the Hurwitz zeta function is closely related to Lerch zeta functions, defined by
\[
L(\lambda,s,\alpha) = \sum_{n = 0}^\infty \frac{\mme^{2n\pi\mmi \lambda}}{(n+\alpha)^s}, \quad \re(s)>1, \quad \lambda\in\mbr.
\]
In particular, the Hurwitz zeta function corresponds to the case \(\lambda=0\). Thus, the Lerch zeta function may be viewed as an additive-twist extension of the Hurwitz zeta function. From this perspective, the study of the value distribution and large values of the Hurwitz zeta function also provides a natural starting point for understanding analogous questions for the more general Lerch zeta function.

From now on, we assume that the parameter is rational and write \(\alpha=a/q\), where \(0<a<q\) and \((a,q)=1\). We first consider the case \(\sigma=1/2\). Our first result shows that, for every fixed rational parameter, the Hurwitz zeta function attains large values of the same exponential scale as the strongest known lower bounds for the Riemann zeta function on the critical line.

\begin{thm}
    \label{thm1}
    Let \(a/q \in (0,1)\) be fixed and reduced, and let \(\beta \in [0,1)\). Let \(c\) be a positive number less than \(\sqrt{2(1-\beta)}\). Then for sufficiently large \(T\), we have
    \[
    \max_{T^\beta \le t \le T}\Bg|\zeta\Bg(\frac{1}{2}+\mit,\frac{a}{q}\Bg)\Bg| \ge \exp 
    \bgg(c\sqrt{\frac{\log T \lt T}{\lo T}}\bgg).
    \]
\end{thm}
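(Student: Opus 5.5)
The plan is to adapt the resonance method of Bondarenko--Seip, in the refined form of de la Bretèche and Tenenbaum, to the Dirichlet series \(\zeta(s,a/q)=q^s\sum_{n\equiv a\,(q)}n^{-s}\). Let \(\Phi\) be a smooth nonnegative weight supported on \([1/2,1]\) (or a Gaussian-type weight with compactly supported Fourier transform \(\whp\)). Take a resonator \(R(t)=\sum_{m\in\mathcal M} r(m)m^{-\mit}\) with \(\mathcal M\) a finite set of integers \emph{coprime to \(q\)} and \(|\mathcal M|\le T^{1-\beta-\varepsilon}\) (more precisely, length \(N=T^{\kappa}\) with \(\kappa<1-\beta\)). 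Compare
\[
M_1=\int_{T^\beta}^{T}|R(t)|^2\Phi(t/T)\,\mmd t,\qquad M_2=\int_{T^\beta}^{T}\zeta\Bg(\frac12+\mit,\frac aq\Bg)\,|R(t)|^2\Phi(t/T)\,\mmd t,
\]
so that \(\max|\zeta|\ge |M_2|/M_1\). The factor \(q^{1/2+\mit}\) has modulus \(q^{1/2}\) and only contributes a bounded constant, which is harmless. We can also absorb its phase by using \(\zeta(s,a/q)q^{-s}=\sum_{n\equiv a}n^{-s}\) directly.

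First, we replace \(\zeta\) on \([T^\beta,T]\) by the approximate formula \(\sum_{n\le T}(n+a/q)^{-1/2-\mit}\) plus \(O(T^{-1/2})\) (Hurwitz analogue of Hardy--Littlewood). Expanding gives \(M_2\approx T\sum_{m,n,\ \ell} r(m)\overline{r(\ell)}\,(q/(qn'+a))^{1/2}\,\whp\bigl(\tfrac{T}{2\pi}\log\tfrac{(qn'+a)m}{q\ell}\bigr)\). Since \(\whp\) is compactly supported and \(N\le T^{1-\beta-\varepsilon}\), only near-diagonal terms survive. After the standard positivity argument (\(r\ge0\), \(\Phi\ge0\), \(\whp\ge0\)), the essential lower bound reduces to
\[
\frac{|M_2|}{M_1}\gg \frac{\sum_{m,\ell\in\mathcal M,\ k\equiv a\,(q)}\, r(m)r(\ell)\,k^{-1/2}\,\mathbf 1_{km=\ell}}{\sum_m r(m)^2}.
\]
The condition \(k\equiv a\pmod q\) is the only new feature compared with zeta. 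The truncation to \([T^\beta,T]\) is handled as in Bondarenko--Seip by noting that the contribution of \(|t|\le T^\beta\) is bounded by the trivial bound times \(M_1\cdot T^{\beta-1}\)-type factors. This is why the constant becomes \(\sqrt{2(1-\beta)}\): the effective resonator length is \(T^{1-\beta}\).

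To handle the congruence, we use \eqref{fundamental-decomposition}: \(\mathbf 1_{k\equiv a}=\phi(q)^{-1}\sum_\chi\overline{\chi(a)}\chi(k)\). Better, we choose \(\mathcal M\) supported on primes \(p\equiv1\pmod q\) (a positive-density set, \(1/\phi(q)\), of primes). Then \(k=\ell/m\equiv1\), so the condition \(k\equiv a\) fails unless \(a=1\). To fix this, we multiply the resonator by a fixed shift: the numerator counts pairs \(\ell=k_0km\) where \(k_0\) is a fixed integer with \(k_0\equiv a\pmod q\), say the least prime \(\equiv a\). Concretely, the sum over \(k\) restricted to \(k=k_0k'\) with \(k'\) composed of primes \(\equiv1\pmod q\) gives a lower bound with an extra factor \(k_0^{-1/2}\asymp_q1\). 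With \(\mathcal M\) multiplicative in that sense, the ratio becomes the GCD-sum-type quantity
\[
\frac{1}{\sum r(m)^2}\sum_{m,\ell}\frac{r(m)r(\ell)}{\sqrt{\ell/m}}\ \mathbf 1_{m\mid \ell}.
\]
This is bounded below by \(\prod_{p\in P}(1+r(p)p^{-1/2})\)-type products for multiplicative \(r\) supported on smooth numbers.

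Next, we choose the de la Bretèche--Tenenbaum resonator, restricted to primes \(p\equiv1\pmod q\): \(r(p)=\frac{L}{\sqrt p\,\log p}\) for \(L^2\le p\le \exp((\log L)^2)\), where \(L=\sqrt{\log N\,\lt N/\lo N}\) (with \(N=T^{1-\beta-\varepsilon}\)), and then truncate to \(m\le N\) via Rankin's trick. The resulting lower bound is \(\exp\bigl((1+o(1))\sum_{p} r(p)p^{-1/2}\bigr)\), up to the truncation loss. Using the prime number theorem in progressions, \(\sum_{p\equiv1(q),\,L^2\le p\le e^{(\log L)^2}}\frac{L}{p\log p}\sim\frac{L}{\phi(q)}\cdot\frac{1}{\log L^2}\cdot\ldots\).

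The main obstacle is this last step, and it is where I am unsure. Naively, restricting to a density-\(1/\phi(q)\) set of primes loses a factor \(\phi(q)\) in the exponent constant. The claimed constant \(\sqrt{2(1-\beta)}\) is independent of \(q\), so the restriction must be avoided. What I would try first is \emph{not} to restrict primes at all. Instead, take the full de la Bretèche--Tenenbaum \(r\) on all primes and keep the character expansion. The principal character term then gives \(\phi(q)^{-1}\) times the zeta-type GCD sum, with the local factors at \(p\mid q\) removed, a fixed constant. The nonprincipal terms \(\chi(k)\) contribute twisted sums \(\prod_p(1+\chi(p)r(p)p^{-1/2}+\dots)\), whose size is \(\exp(\sum\re\chi(p)r(p)p^{-1/2})\). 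By equidistribution of primes in residue classes over the dyadic ranges used, \(\sum_p\chi(p)L/(p\log p)=o(L/\log L)\) (Siegel--Walfisz suffices since \(q\) is fixed). So these terms are \(\exp(o(\cdot))\) times the main term, and the factor \(\phi(q)^{-1}\) is absorbed into \(c<\sqrt{2(1-\beta)}\). Positivity is lost in the character expansion, but since the main term dominates exponentially this is fine. The delicate analytic step is making the off-diagonal error terms (\(\whp\) localisation) compatible with dropping positivity. I would handle this by keeping positivity at the level of the original sum over \(n\equiv a\), where all weights are nonnegative, and using the characters only to evaluate the diagonal sum from below.
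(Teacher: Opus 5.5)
\textbf{There is a genuine gap at the level of the constant.}

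Your reduction leads to the one-sided ratio \(\sum_{km=\ell} r(m)r(\ell)k^{-1/2}/\sum_m r(m)^2\). You then evaluate it, for a multiplicative resonator, as \(\exp((1+o(1))\sum_p r(p)p^{-1/2})\). The resonator has \(N=T^{\kappa}\) terms with \(\kappa<1-\beta\). Under that constraint, the most this kind of resonator gives is essentially \(\sum_p r(p)p^{-1/2}=(1+o(1))\sqrt{\log N\log_3 N/\log_2 N}\), attained by the Bondarenko--Seip choice \(r(p)\approx \sqrt{\log N\log_2N/\log_3N}\,/(\sqrt p\,\log(p/(\log N\log_2 N)))\). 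This yields only \(c<\sqrt{1-\beta}\), which is the Bondarenko--Seip constant, not that of de la Bret\`eche--Tenenbaum. The length \(T^{1-\beta}\) explains the factor \(1-\beta\), not the factor \(2\). With the parameters you actually wrote, \(\sum_p r(p)p^{-1/2}\approx L/(2\log L)\), which is smaller by a further factor \(\asymp\log_2 N\).

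The paper gets the missing \(\sqrt2\) from two ingredients absent from your sketch:
\begin{enumerate}
\item[(i)] The double convolution formula of Lemma~\ref{t1-lem2}. It compares \(\int H(\tfrac12+\mit+\mmi y)H(\tfrac12-\mit+\mmi y)K(y)\,\mmd y\), and hence \(\max|H|^2\), with the two-sided G\'al sum \(S(\mcn)/|\mcn|\).
\item[(ii)] The de la Bret\`eche--Tenenbaum set, which satisfies \(S(\mcm)/|\mcm|\ge\exp((2\sqrt2-\delta)\sqrt{\log N\log_3N/\log_2N})\). Taking the square root gives \(\sqrt{2\kappa}\).
\end{enumerate}

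Two further points in your setup also fail.
\begin{enumerate}
\item[(a)] ``Truncate to \(m\le N\) via Rankin'' is incompatible with the long resonator. The constraint is on \(|\mcm|\), not on the size of the elements, which are huge. One needs the grouping into blocks \(\mcn_j\) of multiplicative width \(1+\log T/T\), with \(r(h_j)=|\mcn_j|^{1/2}\), to get \(\int|R|^2\Phi\ll|\mcn|T/\log T\). A size cutoff at \(N\) destroys the \(\log_3\) gain.
\item[(b)] On \([T^\beta,T]\), the term \((x+a/q)^{1-s}/(s-1)\) in your approximate formula has size \(T^{1/2}/t\), not \(O(T^{-1/2})\). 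The paper sidesteps this with the convolution formula together with Rane's mean-square bound for \(|t|\le 2T^\beta\).
\end{enumerate}

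\textbf{The congruence step is not established either.} After truncation to \(N\) elements the resonator is no longer multiplicative. So the twisted sums in your character expansion are not Euler products, and Siegel--Walfisz does not control them. Since you give up positivity at that point, ``the main term dominates exponentially'' is an assertion rather than an argument.

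The paper uses no characters at \(\sigma=1/2\).
\begin{enumerate}
\item[(i)] Lemma~\ref{t1-lem3} replaces each \(m\) by \(u_mm\equiv1\pmod q\), with \(u_m\le q\) and all prime factors of \(m\) exceeding \(q\). This keeps the map injective and costs at most a factor \(q\) in each \(Q(m,n)\).
\item[(ii)] Lemma~\ref{t1-lem4} then uses the two-sided relation \(xk=y\ell\). Write \(d=(x,y)\), \(x=dx'\), \(y=dy'\), and take \(k=y'r_d\), \(\ell=x'r_d\) with \(r_d\in[1,q]\) and \(r_d\equiv ad\pmod q\). Then \(k\equiv\ell\equiv a\pmod q\) and \(q/\sqrt{k\ell}\ge Q(x,y)\).
\end{enumerate}
Multiplying both sides by a common bounded factor is exactly the freedom that the one-sided relation \(km=\ell\) lacks. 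That is why you ran into the \(a\not\equiv1\) obstruction. In the paper's route every term stays nonnegative, so only a \(q\)-dependent constant is lost.
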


Compared with \eqref{RS89-critical-line}, our Theorem \ref{thm1} gives a stronger lower bound for a fixed rational parameter. Indeed, when \(H\asymp T\), our result gains an additional factor \(\sqrt{\lt T}\) in the exponent. We note, however, that \eqref{RS89-critical-line} is formulated for the shorter interval \([T,T+H]\), whereas our result concerns the interval \([T^\beta,T]\). We also compare Theorem \ref{thm1} with the result of de la Bretèche and Tenenbaum \cite{delaBreteche2019PLMS} for the Riemann zeta function. Although Theorem \ref{thm1} is stated for \(0<a/q<1\), its proof remains valid when \(a=q=1\). Since \(\zeta(s,1)=\zeta(s)\), in this case Theorem \ref{thm1} recovers \cite[Theorem 1.4]{delaBreteche2019PLMS}. More generally, Theorem \ref{thm1} shows that the same scale and leading constant persist for the Hurwitz zeta function with every fixed rational parameter when \(\sigma =1/2\). In particular, the congruence restriction arising from the rational parameter causes no loss in the leading constant.

We next turn to the critical strip \(\sigma \in (1/2,1)\). For every fixed rational parameter, we obtain a lower bound for large values of Hurwitz zeta function in this region.

\begin{thm}
    \label{thm2}
    Let \(\sigma \in(1/2,1)\) be fixed, and let \(a/q \in (0,1)\) be fixed and reduced. Let \(\kappa\) be any fixed positive real number satisfying
    \[0 < \kappa<\frac{\sigma-1/2}{\sigma(1+\lambda(\sigma))},\]
    where \(\lambda(\sigma)= \int_ 0^1 \mmd t/(2t^{-\sigma}-1)\). Then for sufficiently large \(T\), we have
    \[
    \max_{\sqrt{T} \le t \le T}\Bg|\zeta\Bg(\sigma+\mit,\frac{a}{q}\Bg)\Bg| \ge \exp\Bg( \Bg(\frac{\sigma}{1-\sigma}\kappa^{1-\sigma}+o_{\sigma,q}(1)\Bg) \frac{(\log T)^{1-\sigma}}{(\lo T)^\sigma}\Bg).
    \]
\end{thm}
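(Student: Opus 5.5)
We follow the resonance method in the form Aistleitner used for \(\sigma\in(1/2,1)\). The arithmetic input is a truncated Dirichlet-series approximation of \(\zeta(s,a/q)\) that keeps only integers \(n\equiv a \pmod q\), together with a resonator supported on integers \(\equiv 1 \pmod q\).

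\textbf{Approximation.} For \(\sqrt T\le t\le T\) and \(\sigma\) fixed in \((1/2,1)\), the standard approximate formula for the Hurwitz zeta function gives
\[
\zeta(\sigma+\mit,a/q)=q^{\sigma+\mit}\sum_{\substack{n\le qT\\ n\equiv a\ (q)}} n^{-\sigma-\mit}+O(T^{-\sigma}).
\]
It is more convenient to work with the smoothed integral
\[
I:=\int \zeta(\sigma+\mit,a/q)\,|R(t)|^2\,\Phi(t/T)\,\mdt ,
\]
where \(\Phi\) is a fixed nonnegative bump with \(\widehat\Phi\ge0\) and \(\Phi\) supported near \([1/2,1]\). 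The contribution of \(t<\sqrt T\) can be cut off at negligible cost, as in Aistleitner's argument.

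\textbf{Resonator.} We take \(R(t)=\sum_{m\in\mathcal M} r(m)\, m^{-\mit}\). Here \(\mathcal M\) is a set of integers built from primes \(p\equiv1\pmod q\) in the range \(L\le p\le \exp((\log L)^2)\), with
\[
L=\kappa\,\log T\,\lo T \quad\text{(suitably normalised)}, \qquad r(m)=f(m)=\prod_{p\mid m}\frac{L^{\sigma}}{(\log p)^{\sigma}\, p^{\sigma}}\cdot(\ldots).
\]
This is Aistleitner's multiplicative choice, truncated so that \(|\mathcal M|\le T^{\kappa'}\) with \(\kappa'<1\). Because every \(m\in\mathcal M\) satisfies \(m\equiv1\pmod q\), every product \(mn\) with \(n\equiv a\) is again \(\equiv a\). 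Expanding \(I\) therefore gives
\[
I\approx T\,q^{\sigma}\sum_{m,\,n}\frac{r(m)\,r(mn)}{n^{\sigma}}\,\widehat\Phi\!\Big(\frac{T}{2\pi}\log\frac{\cdots}{\cdots}\Big)+\text{error}.
\]
The factor \(q^{\mmi t}\) is harmless: it only shifts the frequency, and we either absorb it into \(R\) by using \(R(t)q^{\mmi t/2}\) or use the positivity of \(\widehat\Phi\) near \(0\). After this, the diagonal terms dominate, and we also have \(\int|R|^2\Phi\approx T\sum r(m)^2\). The off-diagonal terms and the contribution \(n>qT\) are controlled using \(|\mathcal M|\le T^{\kappa'}\) and the \(O(T^{-\sigma})\) remainder.

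\textbf{Ratio bound.} We need a lower bound for
\[
\frac{\sum_{m}\sum_{n\equiv 1\ (q)} r(m)\,r(mn)\,n^{-\sigma}}{\sum_m r(m)^2}.
\]
Restricting \(n\) to squarefree products of primes \(p\equiv 1\pmod q\) in the same range, the ratio is at least
\[
\prod_{p}\Bigl(1+f(p)\,p^{-\sigma}\Bigr)\times(1-o(1)).
\]
Its logarithm is
\[
\sum_{\substack{L\le p\le \exp((\log L)^2)\\ p\equiv 1\ (q)}}\frac{L^{\sigma}}{p^{2\sigma}(\log p)^{\sigma}}\cdots .
\]
By the prime number theorem in arithmetic progressions, this equals \(\frac{1}{\phi(q)}\) times the corresponding sum over all primes, up to \(1+o(1)\).

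\textbf{Main obstacle.} The factor \(1/\phi(q)\) would spoil the constant, so the resonator cannot simply be restricted to \(p\equiv1\pmod q\). Instead we must use all primes \(p\nmid q\) and handle the residue bookkeeping: the diagonal \(mn\) must satisfy \(n\equiv a\,\overline{\,\cdot\,}\) relative to the residue of \(m\). We would do this by splitting \(R=\sum_{b} R_b\) according to the residue class of \(m\) modulo \(q\). Multiplying \(R\) by the character twist, equivalently using decomposition \eqref{fundamental-decomposition}, turns \(I\) into \(\phi(q)^{-1}\sum_\chi \overline{\chi(a)}\int L(\sigma+\mit,\chi)|R|^2\).

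With the resonator \(r\) real, positive and multiplicative, the principal character contributes \(\prod_p(1+f(p)p^{-\sigma})\) over all \(p\nmid q\), which gives the full Riemann-zeta constant. The nonprincipal characters contribute \(\prod_p(1+\chi(p)f(p)p^{-\sigma})\). We must show these are of strictly smaller size: their logarithm is \(\sum \chi(p)(\cdots)\), which by PNT in progressions (\(q\) fixed) is \(o\) of the principal term.

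We would therefore choose \(\Phi\) so that \(\widehat\Phi\ge0\), bound each character term crudely, and conclude that
\[
\max|\zeta|\ge \frac{I}{\int|R|^2\Phi}\ge \exp\Bigl(\Bigl(\tfrac{\sigma}{1-\sigma}\kappa^{1-\sigma}+o(1)\Bigr)\frac{(\log T)^{1-\sigma}}{(\lo T)^{\sigma}}\Bigr).
\]
The constraint \(\kappa<(\sigma-1/2)/(\sigma(1+\lambda(\sigma)))\) enters exactly as in Aistleitner's size bound on \(\mathcal M\), via the GCD-sum/smooth-number estimate with \(\lambda(\sigma)\).
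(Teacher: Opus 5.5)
\textbf{Gap 1: where positivity is applied.} The failure is at the step where you pass to $\phi(q)^{-1}\sum_\chi\overline{\chi(a)}\int L(\sigma+\mmi t,\chi)|R(t)|^2\Phi$. You then read off the contribution of each $\chi\ne\chi_0$ as the diagonal Euler product $\prod_p(1+\chi(p)f(p)p^{-\sigma})$, and this is not justified.

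In the long-resonator setting you describe, the off-diagonal terms are not small. You have $\widehat\Phi\ge0$ and only a cardinality bound $|\mathcal M|\le T^{\kappa'}$, with no bound like $\max\mathcal M\le T^{1-\varepsilon}$. The off-diagonal terms can be discarded only because they are nonnegative. Once you isolate a single non-principal character, that positivity is gone. The ``crude'' bound for its integral is then the full principal-character integral, i.e.\ the same size as the main term, so nothing shows these terms are smaller. Moreover, for a truncated set $\mathcal M$ the diagonal does not factor as an Euler product at all.

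The missing idea is to reverse the order. Since $\mathbf 1_{k\equiv a\,(\mathrm{mod}\,q)}\ge 0$, first use positivity on the whole residue-class sum and keep only exact diagonal terms. The paper keeps $m=kd$, $n=q\ell$, which gives
\[
\frac{V_2}{V_1}\ge r(q)q^{\sigma}\sum_{\substack{k\le qN\\ k\equiv a \pmod q}}\frac{r(k)}{k^{\sigma}}.
\]
Only then is this nonnegative \emph{arithmetic} sum expanded by orthogonality. This step needs an untruncated multiplicative resonator $R(t)=\prod_{p\le X}(1-r(p)p^{\mmi t})^{-1}$, so that each piece is the explicit product $Q_\sigma(\chi,X)=\prod_{p\le X}(1-\chi(p)r(p)p^{-\sigma})^{-1}$. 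Siegel--Walfisz then gives $\log|Q_\sigma(\chi,X)|=o(X^{1-\sigma}/\log X)$ for $\chi\ne\chi_0$.

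Three related points:
\begin{itemize}
\item Replacing $R$ by $R(t)q^{\mmi t/2}$ does nothing, since $|R|^2$ is unchanged. Either drop $q^{\mmi t}$ using $|q^{\mmi t}|=1$, or take $n=q\ell$ as the paper does, which requires the primes of $q$ to lie in the support of $r$.
\item Your first resonator, built from primes $p\equiv1\pmod q$, fails outright for $a\not\equiv1$. Every ratio of resonator indices is then $\equiv1\pmod q$, so the diagonal is empty; it is not merely a $1/\phi(q)$ loss.
\item A real $\Phi$ supported near $[1/2,1]$ cannot have $\widehat\Phi\ge0$, because a real $\widehat\Phi$ forces $\Phi$ to be even. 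Use the Gaussian on all of $\mathbb R$ and cut off $|t|\le\sqrt T$ as described below.
\end{itemize}

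\textbf{Gap 2: the constant.} Neither $\frac{\sigma}{1-\sigma}\kappa^{1-\sigma}$ nor the condition involving $\lambda(\sigma)$ comes out of Aistleitner's GCD-type resonator. $\lambda(\sigma)$ plays no role in his size bound, and your last paragraph asserts this rather than deriving it. Both are features of the resonator $r(p)=1-(p/X)^{\sigma}$ for $p\le X=\kappa\log T\lo T$. For this resonator:
\begin{itemize}
\item $\log Q_\sigma(\chi_0,X)=\sum_{p\le X}(p^{-\sigma}-X^{-\sigma})+O_{\sigma,q}(1)\sim\frac{\sigma}{1-\sigma}\frac{X^{1-\sigma}}{\log X}$, which is exactly the constant in the statement.
\item $\sup_t|R(t)|^2=R(0)^2\le T^{2\kappa\sigma+o(1)}$.
\item $V_1\ge\sqrt{2\pi}\,\frac{T}{\log T}\sum_n r(n)^2=T^{1+\kappa\sigma(1-\lambda(\sigma))+o(1)}$. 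Here $\sigma\lambda(\sigma)=\int_0^1\log(2-t^{\sigma})\,\mathrm{d}t$ arises from $-\sum_{p\le X}\log(1-r(p)^2)$.
\end{itemize}
The hypothesis on $\kappa$ is exactly what gives $2\kappa\sigma+A(1-\sigma)+\tfrac12<1+\kappa\sigma(1-\lambda(\sigma))$ for a truncation length $N\asymp T^{A}$ with $A$ close to $1$. That inequality makes the range $|t|\le\sqrt T$, estimated trivially via $|D(t)|\ll T^{A(1-\sigma)+o(1)}$, negligible.
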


Compared with \eqref{RS89-critical-strip}, Theorem \ref{thm2} gives a stronger lower bound for a fixed rational parameter. As \(H\asymp T\), the result of \cite{Ramachandra1989ArchMath} has an exponent of order \((\log T)^{1-\sigma}(\lo T)\inv\), whereas Theorem \ref{thm2} replaces the denominator \(\lo T\) by \((\lo T)^\sigma\). Thus, our result gains a factor \((\lo T)^{1-\sigma}\) in the exponent. Although Theorem \ref{thm2} is stated for \(0<a/q<1\), the same argument also applies to the case \(a=q=1\). In this case, the order in Theorem \ref{thm2} agrees with that of large values of the Riemann zeta function; see \cite{Aistleitner2016MathAnn,Montgomery1977CommentMH,Qyyang2024JNT}.

Finally, we turn to the \(1\)-line. For every fixed rational parameter, we obtain a lower bound for large values of Hurwitz zeta function on this line.

\begin{thm}
    \label{thm3}
    Let \(a/q \in (0,1)\) be fixed and reduced. Then there exists a positive constant \(C_q>0\), depending only on \(q\), such that, for sufficiently large \(T\), we have
    \[
    \max_{\sqrt{T}\le t\le T}\Bg|\zeta\Bg(1+\mit,\frac{a}{q}\Bg)\Bg| \ge \mme^\gamma (\lo T+\lt T) -C_{q}.
    \]
    Here \(\gamma\) denotes the Euler–Mascheroni constant.
\end{thm}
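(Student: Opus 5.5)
We use the resonance method on the $1$-line, in the form of Aistleitner--Mahatab--Munsch. The idea is to show that a suitable weighted average of $\zeta(1+\mit,a/q)$ is large; the average is dominated by the term $n\equiv a\pmod q$, which by \eqref{fundamental-decomposition} equals
\[
\zeta\Bg(1+\mit,\frac{a}{q}\Bg)=q^{1+\mit}\sum_{n\equiv a \ (q)} n^{-1-\mit}.
\]

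\textbf{Step 1 (approximation).} For $\sqrt T\le t\le T$, approximate $\zeta(1+\mit,a/q)$ by the finite sum
\[
q\sum_{\substack{n\le T\\ n\equiv a\ (q)}} (q/n)^{\mit}\,\frac{1}{n},
\]
with error $O(T^{-1/2})$. This is obtained either from the standard approximate formula $\zeta(s,\alpha)=\sum_{0\le n\le x}(n+\alpha)^{-s}+x^{1-s}/(s-1)+O(x^{-\sigma})$, or via \eqref{fundamental-decomposition} and a smoothed version. To avoid a tail we insert a Gaussian weight $\Phi((t-T')/T')$; this also lets $\widehat\Phi$ localise the frequencies $\log(m/n)$.

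\textbf{Step 2 (resonator).} Take $R(t)=\prod_{p\le X}(1-r(p)p^{\mit})^{-1}$ with $r(p)=1-p/X$, where $X=c\log T\lo T$ is chosen so that $|R|^2\le T^{1/2}$, i.e.\ $\log R(t)\ll X/\log X$. We compare
\[
I_1=\int \zeta\Bg(1+\mit,\frac aq\Bg) q^{-\mit}|R(t)|^2\Phi\,\mmd t
\qquad\text{with}\qquad
I_2=\int |R(t)|^2\Phi\,\mmd t .
\]
Expanding $I_1$, the diagonal contributions pair $m n$ with $m$ (coefficients $r(m)r(mn)$) for $n\equiv a\pmod q$. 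Positivity of all coefficients lets us drop off-diagonal terms, since $\widehat\Phi\ge0$. This gives
\[
\frac{I_1}{I_2}\ \ge\ q\sum_{\substack{n\equiv a\ (q)\\ p\mid n\Rightarrow p\le X}}\frac{\prod_{p^k\| n} r(p)^k}{n}-o(1),
\]
using the multiplicativity of $r$ and the fact that $r(mn)/r(m)$ factors.

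\textbf{Step 3 (main obstacle: the congruence restriction).} The difficulty is that the restriction $n\equiv a\pmod q$ destroys the Euler product. We detect it with characters:
\[
\frac{q}{\phi(q)}\sum_\chi\overline{\chi}(a)\prod_{p\le X,\ p\nmid q}\Bigl(1-\frac{\chi(p)r(p)}{p}\Bigr)^{-1}.
\]
Two estimates are needed.
\begin{itemize}
\item The principal character gives
\[
\frac{q}{\phi(q)}\prod_{p\mid q}\Bigl(1-\frac1p\Bigr)\cdot \mme^\gamma\log X\,(1+o(1)) = \mme^\gamma\log X+O_q(1),
\]
by Mertens. One must check that the weights $1-p/X$ cost only $O(1)$: $\sum_{p\le X}\frac{1}{X}\ll 1/\log X$, which is fine.
\item Non-principal $\chi$ give bounded products $\prod(1-\chi(p)r(p)/p)^{-1}=L(1,\chi)\cdot O_q(1)$, by the prime number theorem in progressions (fixed $q$, no Siegel issue).
\end{itemize}
Hence the ratio is $\mme^\gamma\log X-C_q=\mme^\gamma(\lo T+\lt T)-C_q$. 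The main work is making this uniform and controlling the truncation $n\le T$ versus smooth $n$. I would handle the truncation by Rankin's trick: restrict to $n\le T^{1/4}$, since the smooth tail $\sum_{n>T^{1/4}} r(n)/n$ is negligible by comparison with $n^{\varepsilon}$ weighting.

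\textbf{Step 4 (conclusion).} The range $t<\sqrt T$ contributes negligibly because $|R|^2\le T^{1/2}$, $\Phi$ decays rapidly, and $I_2\gg T\prod(1-r(p)^2)^{-1}$. Also $|\zeta(1+\mit,a/q)|\ll\log t$. Therefore
\[
\max_{\sqrt T\le t\le T}\Bg|\zeta\Bg(1+\mit,\frac aq\Bg)\Bg|\ \ge\ |I_1|/I_2-o(1),
\]
which yields the theorem.
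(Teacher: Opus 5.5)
Your overall architecture is right: the long resonator with $r(p)=1-p/X$, keeping only the terms $m=nd$ by positivity, complete multiplicativity, character orthogonality, Mertens for $\chi_0$, boundedness for $\chi\neq\chi_0$, and Rankin for the truncation. But the weight you write down breaks the one step that depends on positivity. Your $R(t)=\prod_{p\le X}(1-r(p)p^{\mmi t})^{-1}$ is an infinite series, so pairs of $X$-smooth $m,n$ with $|\log(m/n)|$ arbitrarily small occur. The off-diagonal terms therefore cannot be killed by Fourier decay, as they can for a short resonator; they can only be discarded because they are nonnegative. With the shifted Gaussian, however,
\[
\int_{\mathbb{R}}\Bg(\frac{m}{n}\Bg)^{\mmi t}\Phi\Bg(\frac{t-T'}{T'}\Bg)\mmd t=T'\Bg(\frac{m}{n}\Bg)^{\mmi T'}\widehat{\Phi}\Bg(T'\log\frac{m}{n}\Bg).
\]
The unimodular factor $(m/n)^{\mmi T'}$ destroys positivity, so ``$\widehat\Phi\ge0$'' does not let you drop anything. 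This weight is also $\asymp1$ on all of $[-T',0]$, so your Step 4 claim that $t<\sqrt T$ is negligible ``because $\Phi$ decays'' fails for it. Moreover, $\zeta(1+\mit,a/q)$ itself cannot be integrated over a range containing $t=0$: its pole makes the integrand non-integrable there. For the same reason, $|\zeta|\ll\log t$ is not the bound you need near $t=0$.

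The repair is the framework the paper uses. Integrate the pole-free approximant $D(t)=q^{1+\mit}\sum_{k\le qN,\,k\equiv a\,(q)}k^{-1-\mit}$, which is $\ll\log T$ for all $t$, against $|R(t)|^2\Phi(\frac{\log T}{T}t)$ over all of $\mathbb{R}$. Every frequency then carries the weight $\frac{T}{\log T}\widehat{\Phi}(\frac{T}{\log T}\log\frac mn)>0$. Then:
\begin{itemize}
\item Discard $|t|\le\sqrt T$ using $\sup|R|^2\le T^{1/2}$, $\sup|D|\ll\log T$ and $I_2\ge\frac{\sqrt{2\pi}\,T}{\log T}\sum_m r(m)^2=T^{1+c'+o(1)}$ with $c'>0$; your Step 4 estimate does exactly this.
\item Discard $|t|\ge T$ by the Gaussian decay.
\item Use $D(-t)=\overline{D(t)}$ and $|R(-t)|=|R(t)|$ to identify the negative half-line with the conjugate of the positive one, and pass to $\re$.
\item Take the truncation length a bit larger than $T$, e.g.\ $N=\lfloor T^A\rfloor$ with $A>1$ as in Lemma \ref{t2-lem1}. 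The approximate formula you quote needs $x\gg|t|$, whereas $n\le T$ corresponds to $x\approx T/q$, which does not cover $t$ near $T$ when $q$ is large.
\item State Mertens with relative error $O(1/\log X)$ rather than $1+o(1)$, so the main term really is $\mme^\gamma\log X+O_q(1)$.
\end{itemize}

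With these changes your proof is correct, and it differs from the paper mainly in the approximation step. The paper uses the truncated Euler product $M(1+\mit;Y)$ with $Y=\exp((\log T)^{10})$. Positivity then cuts its $Y$-smooth support down to $X$-smooth numbers with no tail estimate needed. Your truncated Dirichlet series instead needs Rankin's trick at $\sigma=1$ (as the paper does for $1/2<\sigma<1$). That trick works because $\sum_{p\le X}p^{\eta-1}\ll X^\eta/\log X=o(\log T)$. Your $q^{-\mmi t}$ twist is a harmless simplification of the paper's choice $m=kd$, $n=q\ell$, which costs only the factor $r(q)=1+O_q(1/X)$.
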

Compared with \eqref{RS89-1-line}, our Theorem \ref{thm3} gives a substantial improvement for a fixed rational parameter. Indeed, when \(H\asymp T\) and \(q\) is fixed, \eqref{RS89-1-line} gives a lower bound of order \((\lo T)^{1/\phi(q)}\), whereas Theorem \ref{thm3} reaches the order \(\lo T\) and captures the secondary term \(\lt T\).
Although Theorem \ref{thm3} is stated for \(0<a/q<1\), its proof also remains valid when \(a=q=1\).  In this case, the order in Theorem \ref{thm3} agrees with that of large values of the Riemann zeta function; see \cite[Theorem 1]{Aistleitner2019IMRN}. In fact, for \(0<\beta<1\), the interval over which the maximum is taken can be replaced by \([T^\beta,T]\). In this case, the constant \(C_q=C_{q,\beta}\) depends on both \(q\) and \(\beta\).

In this paper, the resonance method plays an important role. It can be traced back to the work of Voronin \cite{Voronin1988IANSSSR}, and it was later developed into a particularly simple and effective form by Soundararajan \cite{Soundararajan2008MathAnn}. Aistleitner \cite{Aistleitner2016MathAnn} further developed this method by combining ideas from the GCD sum, obtaining improved lower bounds for the Riemann zeta function inside the critical strip. Subsequently, Bondarenko and Seip \cite{Bondarenko2017DukeMJ,Bondarenko2018MathAnn} introduced and developed the long resonance method, establishing a deeper connection between large values of the Riemann zeta function and the GCD sum and obtaining substantial improvements, particularly on the critical line. Since then, the resonance method and its variants have been successfully applied to a wide range of problems concerning large values of the Riemann zeta function and Dirichlet \(L\)-functions. For further details and subsequent developments, we recommend \cite{Aistleitner2019QJMath,Bondarenko2018operTAA,Bondarenko2023BLMS,Ddyang2024BLMS} and the references therein.

Throughout the paper, \(\delta\) and \(\varepsilon\) denote small positive constants, not necessarily the same at each occurrence. Furthermore,  we denote the Fourier transform of a function \(f \in L^1(\mbr)\) as
\[\widehat{f}(\xi) := \int_{\mathbb R} f(x) \mme^{-\mmi\xi x} \mmd x.\]

The rest of the paper is organized as follows. In Section \ref{sec-strategy}, we briefly outline the proof strategy of the paper. In Sections \ref{sec-pthm1}-\ref{sec-pthm3}, we prove Theorems \ref{thm1}-\ref{thm3}, respectively.

\section{Strategy of the paper}
\label{sec-strategy}

We now outline the proof strategy of the paper. For \(\sigma=1/2\), our proof is based on the long resonance method applied to the normalized Hurwitz zeta function \(H_{a,q}(s)\), and follows the strategy used by Yang \cite{Ddyang2022Mathematika}, where a convolution formula is combined with the GCD sum in the study of large values of derivatives of the Riemann zeta function. We use the same GCD sum construction from \cite{delaBreteche2019PLMS} as the starting point. A new difficulty, however, arises in the Hurwitz zeta setting, since its Dirichlet coefficients are restricted to the fixed residue class \(a \pmod q\). Thus, the GCD set from \cite[p. 22]{delaBreteche2019PLMS} cannot be inserted directly into this paper. The main new ingredients on the critical line are Lemmas \ref{t1-lem3} and \ref{t1-lem4}, which overcome this congruence obstruction. More precisely, Lemma \ref{t1-lem3} transforms the set \(\mcm\) of \cite{delaBreteche2019PLMS} into a set \(\mcn\) whose elements all satisfy \(x \equiv 1 \pmod q\), while preserving the size of the GCD sum up to a constant factor depending only on the fixed modulus \(q\). Lemma \ref{t1-lem4} then shows that, for every \(x,y\in\mcn\), one can find positive integers \(k,\ell\) such that
\[xk=y\ell,\qquad k\equiv\ell\equiv a\pmod q,\]
while retaining the required lower bound for the corresponding weight. This allows the GCD sum result of \cite{delaBreteche2019PLMS} to be incorporated into the resonance framework for the Hurwitz zeta function in the case \(\sigma=1/2\).

For \(\sigma \in(1/2,1)\), we use the long resonance method developed by Yang \cite{Ddyang2023arxiv}. The main additional ingredient required in the Hurwitz zeta setting is Lemma \ref{t2-lem1}, where we establish a uniform truncation formula for the Hurwitz zeta function in a form more general than what is needed for Theorem \ref{thm2}. This approximation reduces the problem to a Dirichlet polynomial supported on the fixed residue class \(a\pmod q\), to which the resonator from \cite{Ddyang2023arxiv} can be applied. Using the non-negativity and the multiplicative structure of the resonator coefficients, we reduce the resulting expression to an arithmetic sum over the residue class \(a \pmod q\). We then apply the orthogonality of Dirichlet characters to separate the principal and non-principal character contributions. The principal character gives the main term, whereas the contributions from the non-principal characters are negligible. This yields the desired lower bound in Theorem \ref{thm2}.

For \(\sigma=1\), we adapt the long resonance method of Aistleitner, Mahatab, and Munsch \cite{Aistleitner2019IMRN} to the Hurwitz zeta function. Using \eqref{fundamental-decomposition}, together with a truncated Euler product approximation, we first reduce the problem to a sum over smooth integers lying in the fixed residue class \(a\pmod q\). We then apply the resonator used in \cite{Aistleitner2019IMRN}. The positivity of the relevant coefficients allows us to restrict the smoothness range to that of the resonator, and the resulting sum is treated by the orthogonality of Dirichlet characters. The principal character gives the main contribution, which is evaluated by Mertens' theorem, whereas the contributions from the non-principal characters remain bounded. This yields the lower bound in Theorem \ref{thm3}.

\section{Proof of Theorem \ref{thm1}}
\label{sec-pthm1}

In this section, we will use the resonator method developed in \cite{delaBreteche2019PLMS} to prove Theorem \ref{thm1}. For this purpose, we need to construct a double-version convolution formula in order to establish a connection with the GCD sum. For convenience, we define
\[H(s) := H_{a,q}(s) = q^{1/2-s}\zeta\Bg(s,\frac{a}{q}\Bg).\]
It is easy to see that the function \(H(s)\) has a unique simple pole at \(s=1\), with residue \(q^{-1/2}\). Furthermore, we have
\begin{align}
    \label{t1-moduloeq}
    \Bg|H\Bg(\frac{1}{2}+\mit\Bg)\Bg| = \Bg|\zeta\Bg(\frac{1}{2}+\mit,\frac{a}{q}\Bg)\Bg|,
\end{align}
and by \eqref{fundamental-decomposition},
\begin{align}
    \label{t1-decomposition}
    H(s) = \frac{\sqrt{q}}{\phi(q)} \sum_{\chi \pmod q} \overline{\chi(a)}L(s,\chi).
\end{align}

\subsection{Auxiliary lemmas}
\label{t1-sec-lemma}
In this section, we present several lemmas that will play an important role in the subsequent proofs. We begin with some basic properties of the function \(H(s)\).

\begin{lem}
    \label{t1-lem1}
    Let \(a\) and \(q\) be fixed. Then the following estimates hold.
\begin{enumerate}
\item[\((\mathrm{\romannumeral 1})\)] For \(X\ge 2\), we have
\begin{align}
    \label{t1-lem1-moment}
    \int_{-X}^{X}\Bg|H\Bg(\frac{1}{2}+\mit\Bg)\Bg|^2\mdt \ll_q X \log X.
\end{align}
\item[\((\mathrm{\romannumeral 2})\)] Let \(\eta \in (0,1/2]\) be fixed. Then uniformly for all \(|t|\ge 1\) and \(\sigma \in [-\eta,1+\eta]\), we have
\begin{align}
    \label{t1-lem1-bound1}
    H(\sigma+\mit) \ll_{q,\eta}(1+|t|)^{(1-\sigma+\eta)/2}.
\end{align}
\item[\((\mathrm{\romannumeral 3})\)] Uniformly for \(|t|\ge 1\), we have
\begin{align}
    \label{t1-lem1-bound2}
    H(1+\mit)\ll_q \log(2+|t|).
\end{align}
\item[\((\mathrm{\romannumeral 4})\)] There exists a function \(G_{a,q}(z)\), holomorphic in a neighborhood of \(0\), such that
\begin{align}
    \label{t1-lem1-Laurent}
    H(1+z) = \frac{1}{\sqrt{q}z}+ G_{a,q}(z).
\end{align}
\end{enumerate}
\end{lem}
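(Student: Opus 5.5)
All four estimates will be derived from the decomposition \eqref{t1-decomposition}. It expresses \(H(s)\) as the finite linear combination \(\frac{\sqrt q}{\phi(q)}\sum_{\chi}\overline{\chi(a)}L(s,\chi)\), with \(\phi(q)\) terms and bounded coefficients. Since \(q\) is fixed, it suffices to prove each estimate for every individual \(L(s,\chi)\) modulo \(q\). For the moment bound (i), Cauchy--Schwarz gives \(|H|^2\ll_q\sum_\chi|L|^2\). Each \(L(s,\chi)\) is \(L(s,\chi^*)\) times a finite Euler factor \(\prod_{p\mid q}(1-\chi^*(p)p^{-s})\), which is bounded on \(\sigma=1/2\). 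So (i) follows from the classical mean value
\[
\int_{-X}^{X}\Bg|L\Bg(\frac12+\mit,\chi^*\Bg)\Bg|^2\mdt\ll_q X\log X
\]
for primitive characters (and for \(\zeta\) when \(\chi\) is principal). This can be quoted from the standard approximate functional equation / Montgomery--Vaughan mean value theorem. For \(|t|\le 1\) near the pole there is nothing to worry about, since \(\sigma=1/2\) stays away from \(s=1\).

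For (ii), I will use a convexity (Phragmén--Lindelöf) argument applied to each \(L(s,\chi)\). On the line \(\sigma=1+\eta\), \(L\) is bounded by \(\zeta(1+\eta)\). On \(\sigma=-\eta\), the functional equation for \(L(s,\chi^*)\) together with Stirling gives \(|L(-\eta+\mit,\chi^*)|\ll_{q,\eta}(1+|t|)^{1/2+\eta}\). The finite Euler factor is \(\ll_q 1\) there as well, since \(|p^{\eta}|\) is bounded. For principal \(\chi\), I first multiply by \((s-1)/(s+1)\) to remove the pole. Phragmén--Lindelöf in the strip \(-\eta\le\sigma\le 1+\eta\) then gives the exponent obtained by linearly interpolating between \(1/2+\eta\) at \(\sigma=-\eta\) and \(0\) at \(\sigma=1+\eta\). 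That interpolation is \((1/2+\eta)(1+\eta-\sigma)/(1+2\eta)\le (1-\sigma+\eta)/2\), which is easily checked. The growth condition needed for Phragmén--Lindelöf holds because \(L\) has finite order.

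For (iii), I will invoke the classical bound \(L(1+\mit,\chi)\ll_q\log(2+|t|)\) for \(|t|\ge1\). This comes from partial summation on \(\sum_{n\le N}\chi(n)n^{-1-\mit}\) with \(N\asymp |t|\), using the standard approximation \(L(s,\chi)=\sum_{n\le N}\chi(n)n^{-s}+O_q(\cdots)\) (or the same argument as for \(\zeta(1+\mit)\)), and summing over \(\chi\).

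For (iv), in \eqref{t1-decomposition} only the principal character \(\chi_0\) contributes a pole at \(s=1\). There \(L(s,\chi_0)=\zeta(s)\prod_{p\mid q}(1-p^{-s})\) has residue \(\phi(q)/q\), since \(\chi_0(a)=1\). Hence \(H\) has residue \(\frac{\sqrt q}{\phi(q)}\cdot\frac{\phi(q)}{q}=q^{-1/2}\), consistent with the residue computed directly from \(q^{1/2-s}\zeta(s,a/q)\). Subtracting the polar part \(1/(\sqrt q z)\) leaves a function \(G_{a,q}\) holomorphic near \(0\). Each of these steps is essentially routine. The only place needing some care is (ii): checking uniformity in \(\sigma\) and handling the imprimitive characters and the pole. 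Because \(q\) and \(\eta\) are fixed, I expect no genuine obstacle.
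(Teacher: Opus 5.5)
Your proposal is correct and follows essentially the paper's route: everything is reduced via \eqref{t1-decomposition} to bounds for the individual $L(s,\chi)$, with imprimitive characters handled through the finite Euler factor, (iii) taken from the standard $1$-line bound, and (iv) read off from the simple pole of the principal-character term. The only deviations are these. For (i) the paper quotes Rane's mean-square theorem for $\zeta(\frac12+\mathrm{i}t,a/q)$ directly, rather than passing through Cauchy--Schwarz and the mean values of $L(\frac12+\mathrm{i}t,\chi^*)$. For (ii) it cites Rademacher's convexity bound instead of rederiving it by Phragm\'en--Lindel\"of; note that your interpolated exponent $(\frac12+\eta)(1+\eta-\sigma)/(1+2\eta)$ is in fact exactly $(1-\sigma+\eta)/2$.
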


\begin{proof}
By \eqref{t1-moduloeq} and the mean-square estimate for the Hurwitz
zeta function due to Rane \cite[Theorem 2]{Rane1980JLMS},
\[\int_1^T \Bg| \zeta\Bg(\frac{1}{2}+\mit,\frac{a}{q}\Bg)\Bg|^2 \mdt \ll_q T\log T.\]
A straightforward calculation yields \eqref{t1-lem1-moment}. For \((\mathrm{\romannumeral 2})\), let \(\chi\) be a character modulo \(q\), and let \(\chi^\ast\) be the primitive character of conductor \(d \mid q\) inducing \(\chi\). If \(d >1\), then
\[L(s,\chi) = L(s,\chi^\ast)\prod_{p \mid q, p \nmid d} \Bg(1-\frac{\chi^\ast(p)}{p^s}\Bg).\]
Since \(q\) is fixed, the finite Euler product is \(O_{q,\eta}(1)\) uniformly for \(\sigma \in [-\eta,1+\eta]\). By the classical convexity bound for the primitive character (see, for example \cite[Theorem 3]{Rademacher1959MathZ}),
\[L(\sigma+\mit,\chi^\ast)\ll_{q,\eta}(1+|t|)^{(1-\sigma+\eta)/2}.\]
Hence the same estimate holds for \(L(\sigma+\mit,\chi)\). If \(d=1\), then \(\chi=\chi_0\) is the principal character modulo \(q\), and
\[L(s,\chi_0) = \zeta(s)\prod_{p \mid q}\bg(1-p^{-s}\bg).\]
The corresponding convexity bound for \(\zeta(s)\) therefore gives
\[L(\sigma+\mit,\chi_0)\ll_{q,\eta}(1+|t|)^{(1-\sigma+\eta)/2}.\]
Thus, uniformly for all characters modulo \(q\),
\[L(\sigma+\mit,\chi)\ll_{q,\eta}(1+|t|)^{(1-\sigma+\eta)/2}.\]
Combining this with \eqref{t1-decomposition} proves \eqref{t1-lem1-bound1}. Similarly, the standard estimate, valid for every Dirichlet character \(\chi\) modulo \(q\),
\[L(1+\mit,\chi)\ll_q\log(2+|t|)\]together with \eqref{t1-decomposition} gives \eqref{t1-lem1-bound2}. Finally, \eqref{t1-lem1-Laurent} follows from the Laurent expansion of \(H(s)\) at \(s=1\). We complete the proof of Lemma \ref{t1-lem1}.
\end{proof}

We next establish a double-version convolution formula. The argument is similar to \cite[Lemma 3]{Ddyang2022Mathematika} and \cite[Lemma 2.2]{Zhli2026JAustMS}.

\begin{lem}
    \label{t1-lem2}
Let \(\sigma \in [0,1)\) be fixed and suppose \(z = x+\mmi y\). Assume that \(F\) is holomorphic in the horizontal strip \(\sigma-2 \le y \le 0\), satisfying
\[\max_{\sigma-2\le y \le 0}|F(x+\mmi y)|=O\Big(\frac{1}{x^{2} + 1}\Big).\]
Then, for every real \(t \neq 0\),
\[\int_{-\infty}^\infty H(\sigma+\mit+\mmi y)H(\sigma-\mit+\mmi y)F(y)\mmd y= q\sum_{\substack{k,\ell \ge 1 \\ k \equiv \ell \equiv a \pmod q}}\frac{\widehat{F}(\log k\ell)}{k^{\sigma+\mit}\ell^{\sigma-\mit}}-P(t),\]
where 
\[P(t)=\frac{2\pi}{\sqrt{q}}(H(1-2\mit)F(-t-\mmi(1-\sigma))+H(1+2\mit)F(t-\mmi(1-\sigma))).\]
\end{lem}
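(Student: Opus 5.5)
The plan is a standard contour-shift argument. Shift the line of integration in \(y\) from \(\im y=0\) down to \(\im y=\sigma-2\), where both factors become absolutely convergent Dirichlet series; the poles crossed produce \(-P(t)\).

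Write \(y=u+\mmi v\) with \(\sigma-2\le v\le 0\). Then
\[
\re(\sigma+\mit+\mmi y)=\re(\sigma-\mit+\mmi y)=\sigma-v\in[\sigma,2].
\]
On the lower line \(v=\sigma-2\) both arguments have real part \(2\). For \(\re s>1\), the decomposition \eqref{fundamental-decomposition} gives
\[
H(s)=q^{1/2-s}\,q^s\sum_{n\equiv a \pmod q}n^{-s}=\sqrt q\sum_{n\equiv a\pmod q}n^{-s}.
\]
Hence on that line the integrand equals
\[
q\sum_{k\equiv\ell\equiv a\pmod q}k^{-\sigma-\mit}\ell^{-\sigma+\mit}(k\ell)^{-\mmi y}F(y).
\]
The double series converges absolutely, since \(\sum (k\ell)^{-2}\) converges and \(F\) is integrable along the line. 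So we may interchange sum and integral. Each term involves \(\int_{\im y=\sigma-2}F(y)\mme^{-\mmi y\log k\ell}\,\mmd y\). This integral can be moved back to the real axis, because \(F\) is holomorphic in the strip and decays like \((x^2+1)^{-1}\) uniformly there. The result is \(\widehat F(\log k\ell)\), which yields the main sum in the statement.

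Next, identify the poles crossed. The only pole of \(H\) is the simple pole at \(1\), with Laurent expansion given by \eqref{t1-lem1-Laurent}.
\begin{itemize}
\item The first factor is singular where \(\sigma-v=1\) and \(t+u=0\), that is at \(y_1=-t-\mmi(1-\sigma)\).
\item The second factor is singular at \(y_2=t-\mmi(1-\sigma)\).
\end{itemize}
Both points lie strictly inside the strip because \(\sigma<1\). They are distinct because \(t\neq 0\), and at each of them the other factor is regular, since \(1\pm 2\mmi t\neq 1\).

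Near \(y_1\) we have \(\sigma+\mit+\mmi y=1+\mmi(y-y_1)\). By \eqref{t1-lem1-Laurent} the residue in \(y\) is \(\frac{1}{\mmi\sqrt q}H(1-2\mit)F(y_1)\), where we used \(\sigma-\mit+\mmi y_1=1-2\mit\). Symmetrically, the residue at \(y_2\) is \(\frac{1}{\mmi\sqrt q}H(1+2\mit)F(y_2)\).

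The rectangle formed by the real line (left to right) and the lower line (right to left) is traversed clockwise. Therefore
\[
\int_{\mbr}-\int_{\im y=\sigma-2}=-2\pi\mmi\sum\mathrm{Res},
\]
which equals exactly \(-P(t)\). Together with the previous paragraph, this gives the formula.

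The main point requiring care is that the vertical sides of the rectangle at \(\re y=\pm X\) vanish as \(X\to\infty\).
\begin{itemize}
\item For real part in \([1+\eta,2]\), \(H\) is bounded.
\item For real part in \([\sigma,1+\eta]\subset[-\eta,1+\eta]\), \eqref{t1-lem1-bound1} gives each factor \(\ll_{q,\eta}(1+X)^{(1-\sigma+v+\eta)/2}\le(1+X)^{(1+\eta)/2}\). Here we use \(\sigma\ge 0\) and that \(|t\pm u|\asymp X\) for fixed \(t\).
\end{itemize}
So the product of the two \(H\)-factors is \(O(X^{1+\eta})\). Multiplying by \(|F|\ll X^{-2}\) and integrating over a segment of bounded length gives \(O(X^{\eta-1})\to0\) once \(\eta<1\) is fixed small. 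The same bounds show that the original integral over the real line converges absolutely, which justifies every limiting step.
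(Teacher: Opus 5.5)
Your proof is correct and is the paper's argument written in the variable $y$ rather than $z=\sigma+\mmi y$. It uses the same rectangle $\sigma\le\re z\le 2$, evaluates the Dirichlet series on the far line and then shifts $F$ back to the real axis, picks up the two residues that produce $-P(t)$, and kills the short sides with the convexity bound \eqref{t1-lem1-bound1}. One small overstatement: at $\sigma=0$ these bounds only give an integrand of size $(1+|y|)^{-1+\eta}$ on $\mbr$, so they do not show absolute convergence there (they do for $\sigma\in(0,1)$ with $\eta<\sigma$), but you do not need it, since the rectangle identity with both side segments tending to $0$ already shows the improper integral converges to the stated right-hand side.
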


\begin{proof}
Write \(h(z)=H(z+\mit)H(z-\mit)F(\mmi\sigma-\mmi z)\). Then the only poles in \(\sigma \le \re(z) \le 2\) occur at \(1 \pm \mit\). Let \(Y\) be large. Integrating \(h(z)\) along the rectangle with vertices \(\sigma \pm \mmi Y\) and \(2 \pm \mmi Y\), and using the residue theorem, gives
    \[J_1+J_2+J_3+J_4=\mmi P(t).\]
Here, \(J_1\) and \(J_3\) represent the contributions from the lower and upper horizontal segments of the rectangular contour, running from \(\sigma-\mmi Y\) to \(2-\mmi Y\) and from \(2+\mmi Y\) to \(\sigma+\mmi Y\), respectively. Similarly, \(J_2\) and \(J_4\) are the contributions from the right and left vertical segments, oriented from \(2-\mmi Y\) to \(2+\mmi Y\) and from \(\sigma+\mmi Y\) to \(\sigma-\mmi Y\), respectively.

For \(J_2\), we have
\begin{align*}
    J_2 &= \mmi \int_{-Y}^Y H(2+\mit+\mmi y)H(2-\mit+\mmi y)F(y-\mmi(2-\sigma))\mmd y \\
    & = q\mmi\sum_{\substack{k,\ell \ge 1 \\ k \equiv \ell \equiv a \pmod q}}\frac{1}{k^{2+\mit}\ell^{2-\mit}} \int_{-Y}^Y \mme^{-\mmi y \log k\ell}F(y-\mmi(2-\sigma))\mmd y.
\end{align*}
Put \(\omega=y-\mmi(2-\sigma)\). Then
\[J_2 = q\mmi\sum_{\substack{k,\ell \ge 1 \\ k \equiv \ell \equiv a \pmod q}}\frac{1}{k^{\sigma+\mit}\ell^{\sigma-\mit}} \int_{-Y-\mmi(2-\sigma)}^{Y-\mmi(2-\sigma)}\mme^{-\mmi\omega\log k \ell}F(\omega)\mmd \omega.\]
Since \(F\) is holomorphic in the strip \(\sigma-2\le \im\omega\le0\) and satisfies the growth condition, Cauchy’s theorem allows us to shift the line of integration to the real axis. Therefore,
\[\lim_{Y \to \infty}J_2=q\mmi\sum_{\substack{k,\ell \ge 1 \\ k,\ell \equiv a \pmod q}}\frac{\widehat{F}(\log k\ell)}{k^{\sigma+\mit}\ell^{\sigma-\mit}}.\]
On the other hand, 
\[\lim_{Y \to \infty}(-J_4)=\mmi\int_{-\infty}^\infty H(\sigma+\mit+\mmi y)H(\sigma-\mit+\mmi y)F(y)\mmd y.\]

 To estimate \(J_1\), using the growth condition on \(F\) together with \eqref{t1-lem1-bound1}, we have
\begin{align*}
        |J_1| &\ll \frac{1}{Y^2}\Bg(\Bg(\int_\sigma^{1+\eta}+\int_{1+\eta}^2\Bg)|H(x-\mmi Y+\mit)||H(x-\mmi Y-\mit)|\mmd x\Bg) \\
        &\ll_\eta \frac{1}{Y^2}\Bg(1+ \int_\sigma^{1+\eta}\bg(1+Y^{1-x+\eta}\bg)\mmd x\Bg) \ll\frac{1}{Y^2} \Bg(1+\frac{Y^{1-\sigma+\eta}-1}{\log Y} \Bg).
\end{align*}
Taking \(\eta=1/6\), for sufficiently large \(Y\),
    \[J_1 \ll \frac{1}{T^{5/6}\log Y} \to 0.\]
Similarly, we obtain \(J_3 \to 0\) as \(Y \to \infty\). This completes the proof of Lemma \ref{t1-lem2}.    
\end{proof}

For a finite set \(\mcm\) of positive integers and \(\theta>0\), we define
\[S_\theta(\mcm) := \sum_{m,n\in \mcm}\Bg(\frac{(m,n)}{[m,n]}\Bg)^\theta.\]
Here, \((m,n)\) and \([m,n]\) denote the greatest common divisor and least common multiple of \(m\) and \(n\), respectively. We write \(S(\mcm)=S_{1/2}(\mcm)\) and, for convenience, put 
\[Q(m,n):=\frac{(m,n)}{\sqrt{mn}}=\sqrt{\frac{(m,n)}{[m,n]}}.\]
By the construction of de la Bretèche and Tenenbaum \cite{delaBreteche2019PLMS}, for every sufficiently large \(N\), there exists a set \(\mcm\) of positive integers with \(|\mcm|\le N\) such that
\begin{align}
    \label{t1-gcdsum-M}
    \frac{S(\mcm)}{|\mcm|}\ge \exp\bgg((2\sqrt{2}-\delta)\sqrt{\frac{\log N \lt N}{\lo N}}\bgg),
\end{align}
where \(\delta>0\) can be arbitrarily small. Moreover, the prime factors of the elements of \(\mcm\) satisfy 
\[\max_{m\in\mcm}P^+(m) \le (\log N)^{1+\nu}\]
for every fixed \(\nu>0\), provided that \(N\) is sufficiently large. Here, \(P^+(y)\) denotes the largest prime factor of \(y\). Since \(q\) is fixed, we may also assume, by taking \(N\) sufficiently large, that every prime factor of every \(m\in\mcm\) is larger than \(q\).

For each \(m\in\mcm\), let \(u_m \in [1,q]\) be the unique representative satisfying
\[u_m m \equiv 1 \pmod q.\]
We then define
\[\mcn :=\{u_m m:m\in\mcm\}.\]
Thus, every element of \(\mcn\) belongs to the residue class \(1 \pmod q\). The following lemma collects the properties of \(\mcn\) that will be needed later.

\begin{lem}
    \label{t1-lem3}
    The set \(\mcn\) defined above satisfies the following properties.
    \begin{enumerate}
\item[\((\mathrm{\romannumeral 1})\)] The map \(m \mapsto u_m m\) from \(\mcm\) to \(\mcn\) is injective. In particular, \(|\mcn| =|\mcm|\).
\item[\((\mathrm{\romannumeral 2})\)] For every \(x \in \mcn\), we have 
\begin{align}
    \label{t1-lem3-congruence}
    x \equiv 1 \pmod q.
\end{align}
\item[\((\mathrm{\romannumeral 3})\)] We have
\begin{align}
    \label{t1-lem3-gcdsum-X}
    \frac{S(\mcn)}{|\mcn|} \ge \frac{1}{q}\frac{S(\mcm)}{|\mcm|}.
\end{align}
Moreover, the largest prime factor occurring in \(\mcn\) is at most the maximum of \(q\) and the largest prime factor occurring in \(\mcm\).
\end{enumerate}
\end{lem}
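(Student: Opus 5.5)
Throughout, the plan rests on one structural fact: the elements of \(\mcm\) are coprime to \(q\), because every prime factor of every \(m\in\mcm\) exceeds \(q\). Consequently \(u_m\) is well defined and coprime to \(q\), and each \(u_m\) has all of its prime factors \(\le q\). So \(u_m\) and \(m\) are coprime, and the factorization \(x=u_m m\) separates cleanly into a "small-prime part" \(u_m\) and a "large-prime part" \(m\).

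\emph{Parts (i) and (ii).} Property (ii) is immediate from the definition of \(u_m\). For (i), suppose \(u_m m = u_{m'} m'\). Take the part of this integer composed of primes \(>q\). On the left it equals \(m\), since \(u_m\) has only primes \(\le q\). On the right it equals \(m'\). Hence \(m=m'\), the map is injective, and \(|\mcn|=|\mcm|\). The statement about the largest prime factor is equally immediate: a prime dividing \(u_m m\) divides either \(u_m\le q\) or \(m\).

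\emph{Part (iii).} The main step is comparing \(Q(u_m m,u_n n)\) with \(Q(m,n)\). Since \((u_m,n)=(u_n,m)=1\), gcds split over the coprime parts:
\[
(u_m m,u_n n)=(u_m,u_n)(m,n).
\]
Therefore
\[
Q(u_m m,u_n n)=Q(m,n)\,\frac{(u_m,u_n)}{\sqrt{u_m u_n}}\ge Q(m,n)\frac{1}{\sqrt{u_mu_n}}\ge \frac{Q(m,n)}{q},
\]
using \(1\le u_m,u_n\le q\). Summing over pairs and using (i) gives
\[
S(\mcn)\ge q^{-1}S(\mcm),
\]
and dividing by \(|\mcn|=|\mcm|\) yields \eqref{t1-lem3-gcdsum-X}.

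\emph{Main obstacle.} The only delicate point is the coprimality bookkeeping, namely that \((u_m,n)=1\). This relies on the standing assumption that all prime factors of elements of \(\mcm\) exceed \(q\). Without that assumption neither the injectivity argument nor the gcd factorization goes through, so I would state it explicitly at the start. The bound \(q^{-1}\) is crude, but it is sufficient because \(q\) is fixed.
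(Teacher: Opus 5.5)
Your proposal is correct and matches the paper's proof in structure: injectivity comes from separating the prime factors $\le q$ (those of $u_m$) from those $>q$ (those of $m$), and (iii) comes from the pointwise bound $Q(u_m m,u_n n)\ge Q(m,n)/q$ summed over the bijection $\mcm\to\mcn$. The one difference is that the paper gets this bound from the trivial divisibility $(m,n)\mid(u_m m,u_n n)$ instead of your exact factorization $(u_m m,u_n n)=(u_m,u_n)(m,n)$, so the coprimality you flag as the delicate point is needed only through part (i), not for the gcd comparison itself.
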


\begin{proof}
Since every prime factor of \(m\in\mcm\) is larger than \(q\), while every prime factor of \(u_m\) is at most \(q\), the factorization of \(u_m m\) uniquely determines both \(u_m\) and \(m\). Hence the map \(m \mapsto u_m m\) is injective, and therefore \(|\mcn|=|\mcm|\). \eqref{t1-lem3-congruence} follows immediately from the definition of \(u_m\). Let \(x=u_m m\) and \(y=u_n n\). Since \((m,n)\mid(x,y)\) and \(\sqrt{u_mu_n}\leq q\), we have
\[ Q(x,y) = \frac{(x,y)}{\sqrt{xy}} \geq \frac1q\frac{(m,n)}{\sqrt{mn}} = \frac{1}{q}Q(m,n).\]
Summing over \(m,n\in\mcm\) proves the desired estimate for \(S(\mcn)\). Finally, the assertion concerning the prime factors follows directly from the definition of \(\mcn\). Combining the above arguments, we complete the proof of Lemma \ref{t1-lem3}.    
\end{proof}

The preceding construction places all elements of \(\mcn\) in the class \(1 \pmod q\). In the double-version convolution formula, however, the two Dirichlet coefficients are restricted to the class \(a \pmod q\). We therefore need to show that, for every pair \(x,y\in\mcn\), the relation arising in the resonance argument can be realized by two integers \(k\) and \(\ell\) satisfying \(k \equiv \ell \equiv a\pmod q.\) This is given by the following lemma.

\begin{lem}
    \label{t1-lem4}
Let \(x,y\in \mcn\). Then there exist positive integers \(k\) and \(\ell\) such that \(xk=y\ell\), \(k \equiv \ell \equiv a \pmod q\) and 
\begin{align}
    \label{t1-lem4-bound1}
    \frac{q}{\sqrt{k\ell}}\ge Q(x,y).
\end{align}
Moreover, \(k\) and \(\ell\) may be chosen so that
\begin{align}
    \label{t1-lem4-bound2}
    k\ell \le q^2 \frac{[x,y]}{(x,y)}.
\end{align}
\end{lem}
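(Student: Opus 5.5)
Let \(d=(x,y)\) and write \(x=dx'\), \(y=dy'\) with \((x',y')=1\). Every solution of \(xk=y\ell\) in positive integers has the form \(k=y'r\), \(\ell=x'r\) for some positive integer \(r\). The plan is to choose \(r\) so that both congruences hold, and to keep \(r\le q\) so the size bounds follow.

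First I determine the residues of \(x'\) and \(y'\) modulo \(q\). By Lemma \ref{t1-lem3}(ii), \(x\equiv y\equiv 1\pmod q\). Hence \(d\), \(x'\) and \(y'\) are all coprime to \(q\), since any prime divisor of \(q\) dividing them would divide \(x\). Moreover \(dx'\equiv 1\equiv dy'\pmod q\), and since \(d\) is invertible modulo \(q\), this gives \(x'\equiv y'\pmod q\).

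Let \(v\in[1,q]\) be the unique integer with \(vx'\equiv a\pmod q\). It exists because \((x',q)=1\), and it is coprime to \(q\) because \((a,q)=1\). Set \(r=v\), \(k=y'v\) and \(\ell=x'v\). Then
\[
xk=dx'y'v=y\ell,\qquad \ell\equiv a \pmod q,\qquad k\equiv x'v\equiv a\pmod q,
\]
where the congruence for \(k\) uses \(y'\equiv x'\pmod q\). In the degenerate case \(q=1\) we simply take \(v=1\).

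For the bounds, use \(k\ell=v^2x'y'\). Since \(x'y'=xy/d^2=[x,y]/(x,y)\) and \(v\le q\), this gives
\[
k\ell\le q^2\frac{[x,y]}{(x,y)},
\]
which is \eqref{t1-lem4-bound2}. Taking square roots, \(\sqrt{k\ell}\le q\sqrt{x'y'}=q\,\sqrt{xy}/(x,y)=q/Q(x,y)\), which rearranges to \eqref{t1-lem4-bound1}.

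The only step needing care is \(x'\equiv y'\pmod q\). This is exactly where Lemma \ref{t1-lem3}(ii) enters: placing both \(x\) and \(y\) in the class \(1\pmod q\) lets a single multiplier \(v\) fix both residues simultaneously. The rest is routine.
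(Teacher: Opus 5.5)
Your proposal is correct and is essentially the paper's own proof. Your multiplier \(v\) with \(vx'\equiv a \pmod q\) is the same as the paper's \(r_d\equiv ad \pmod q\), because \(x'\equiv y'\equiv d^{-1}\pmod q\). Both bounds then follow from \(k\ell=v^2x'y'\) with \(v\le q\), exactly as in the paper.
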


\begin{proof}
Let \(d=(x,y)\). Then put \(x = d x^\prime\) and \(y = d y^\prime\). Since \(x \equiv y\equiv1\pmod q\), we have \((d,q)=1\), and \(x^\prime, y^\prime \equiv d\inv \pmod q\). Let \(r_d \in [1,q]\) satisfy \(r_d \equiv ad \pmod q\), and define \(k = y^\prime r_d\) and \(\ell = x^\prime r_d\). Then, 
\[xk=dx^\prime y^\prime r_d = d y^\prime x^\prime r_d = y\ell.\]
Moreover, we have
\[
k \equiv y^\prime r_d \equiv d\inv (ad) \equiv a \pmod q
\]
and similarly \(\ell \equiv a \pmod q\). Finally, 
\[\frac{q}{\sqrt{k\ell}} = \frac{q}{r_d\sqrt{x^\prime y^\prime}} = \frac{q}{r_d} \frac{d}{\sqrt{xy}} = \frac{q}{r_d}Q(x,y) \ge Q(x,y),\]
since \(r_d \le q\). This gives \eqref{t1-lem4-bound1}. Also, \eqref{t1-lem4-bound2} follows from
\[k\ell =r_d^2x^\prime y^\prime = r_d^2 \frac{[x,y]}{(x,y)} \le q^2 \frac{[x,y]}{(x,y)}.\]
We now complete the proof of Lemma \ref{t1-lem4}.  
\end{proof}

Finally, we define 
\[
K(u) := \frac{\sin^2(\varepsilon u \log T)}{\pi u^2\varepsilon \log T}
\]
with \(\varepsilon \in (0,1)\). Thus, \(K(u)\) is non-negative on \(\mbr\) and its Fourier transform satisfies
\[
\widehat{K}(\xi) = \max \Bg(0, \Bg(1-\frac{|\xi|}{2\varepsilon\log T}\Bg)\Bg).
\]
The following properties of \(K\) will be used throughout the proof.

\begin{lem}
    \label{t1-lem5}
     Let \(K\) be defined as above. If \(k\) and \(\ell\) are positive integers satisfying \(k\ell\le T^\varepsilon\), then
     \begin{align}
         \label{t1-lem5-bound1}
         \widehat{K}(\log k\ell) \ge \frac{1}{2}.
     \end{align}
     Moreover, uniformly for \(t \in \mbr\),
     \begin{align}
         \label{t1-lem5-bound2}
         \Bg|K\Bg(t\pm \frac{\mmi}{2}\Bg)\Bg| \ll_\varepsilon \frac{T^\varepsilon}{(1+t^2)\log T}.
     \end{align}
     The corresponding estimate for \(K'(t\pm \mmi/2)\) holds with at most one additional factor \(\log T\).
\end{lem}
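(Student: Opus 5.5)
The argument splits into the two assertions of Lemma \ref{t1-lem5}. Both follow by direct computation from the explicit formulas for \(K\) and \(\widehat{K}\), so the plan is short.

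For \eqref{t1-lem5-bound1}, we use the given Fourier transform
\[
\widehat K(\xi)=\max\Bigl(0,\,1-\frac{|\xi|}{2\varepsilon\log T}\Bigr).
\]
If \(k\ell\le T^{\varepsilon}\), then \(0\le \log k\ell\le \varepsilon\log T\). Hence \(|\xi|/(2\varepsilon\log T)\le 1/2\) at \(\xi=\log k\ell\), and \(\widehat K(\log k\ell)\ge 1/2\). If one wishes to confirm the stated transform, it is the standard Fejér-kernel computation: \(\sin^2(Au)/(\pi u^2)\) has Fourier transform \(\max(0,A-|\xi|/2)\), and we take \(A=\varepsilon\log T\) and divide by \(A\).

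For \eqref{t1-lem5-bound2}, we write \(A=\varepsilon\log T\) and \(z=t\pm \mmi/2\). Then \(K(z)=\sin^2(Az)/(\pi A z^2)\).

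\begin{enumerate}
\item[\((\mathrm{\romannumeral 1})\)] \emph{Numerator.} Since \(|\sin w|\le \cosh(\im w)\le \mme^{|\im w|}\), we get \(|\sin(Az)|^2\le \mme^{2A\cdot(1/2)}=\mme^{A}=T^{\varepsilon}\).
\item[\((\mathrm{\romannumeral 2})\)] \emph{Denominator.} We have \(|z|^2=t^2+1/4\ge \tfrac14(1+t^2)\), which stays bounded away from zero.
\item[\((\mathrm{\romannumeral 3})\)] \emph{Combination.} Putting these together gives
\[
|K(z)|\le \frac{4T^\varepsilon}{\pi\varepsilon(1+t^2)\log T},
\]
which is the claimed bound with implied constant depending on \(\varepsilon\).
\end{enumerate}

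For the derivative, we differentiate directly:
\[
K'(z)=\frac{2A\sin(Az)\cos(Az)}{\pi A z^2}-\frac{2\sin^2(Az)}{\pi A z^3}.
\]
The second term is bounded exactly as before, with the extra factor \(1/|z|\le 2\). For the first term, \(|\cos(Az)|\le \mme^{A/2}\) as well, so \(|\sin(Az)\cos(Az)|\le T^{\varepsilon}\). This term therefore has size \(\ll T^\varepsilon/(1+t^2)\), which is the previous bound multiplied by the single factor \(A\asymp\log T\).

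No step is a real obstacle. The only points needing care are that \(|z|\) stays bounded below on the lines \(\im z=\pm 1/2\), so there is no singularity at \(t=0\), and that the exponential growth of \(\sin\) off the real axis gives exactly \(T^{\varepsilon}\). The latter is why the shift is \(\pm\mmi/2\) and the constant in \(K\) is \(\varepsilon\log T\).
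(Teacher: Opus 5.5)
Your proposal is correct and follows essentially the same route as the paper: the explicit triangle formula for $\widehat{K}$ gives the first bound, the estimate $|\sin w|\le e^{|\operatorname{Im} w|}$ together with $|z|^2=t^2+1/4\asymp 1+t^2$ gives the second, and direct differentiation gives the derivative bound. You add explicit constants and a check of the Fej\'er-kernel transform, which the paper takes for granted.
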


\begin{proof}
    From the explicit formula for \(\widehat{K}\), if \(k\ell \le T^\varepsilon\), then \(\log k\ell \le \varepsilon\log T\). Hence
    \[\widehat{K}(\log k\ell) = 1 - \frac{\log k\ell}{2\varepsilon \log T} \ge \frac{1}{2},\]
    which proves the first assertion. On the other hand, let \(z = t \pm \mmi/2\). Since \(|z|^2 = t^2 + 1/4 \asymp 1+ t^2\) and \(|\sin(x+\mmi y)| \le \mme^{|y|}\), we have \(|\sin^2(\varepsilon z \log T)| \le T^\varepsilon\). Substituting this into the definition of \(K\), we obtain
    \[\Bg|K\Bg(t\pm \frac{\mmi}{2}\Bg)\Bg| \ll_\varepsilon \frac{T^\varepsilon}{(1+t^2)\log T}.\]
    Finally, differentiating the definition of \(K\) and applying the same estimates to the sine and cosine factors gives the corresponding bound for \(K'(t\pm \mmi/2)\), with at most one additional factor \(\log T\). 
\end{proof}

\subsection{Proof of Theorem \ref{thm1}}
\label{t1-sec-prove}

We now turn to the proof of Theorem \ref{thm1}. Fix \(c<\sqrt{2(1-\beta)}\). Choose \(\kappa\in(0,1-\beta)\) sufficiently close to \(1-\beta\) so that
\[c <\sqrt{2\kappa,}\]
and put \(N=\lfloor T^\kappa\rfloor\). Let \(\mcn\) be the set constructed in Lemma \ref{t1-lem3}. For an arbitrarily small fixed \(\delta >0\), \eqref{t1-lem3-gcdsum-X} gives that 
\begin{align}
    \label{t1-gcdX}
    \frac{S(\mcn)}{|\mcn|} \ge \frac{1}{q}\exp\bgg((2\sqrt{2}-\delta)\sqrt{\frac{\log N \lt N}{\lo N}}\bgg)
\end{align}
holds if \(N\) is large. For each integer \(j \ge 0\), define
\[\mcn_j := \bgg[\Bg(1 + \frac{\log T}{T}\Bg)^{j}, \Bg(1 + \frac{\log T}{T}\Bg)^{j + 1}\bgg) \cap \mcn.\]
Following the notation of \cite[p. 22]{delaBreteche2019PLMS}, let \(h_j := \min \mcn_j\) if \(\mcn_j \neq \emptyset\). Define \(\mch\) as the set of all \(h_j\) and we consider the function \(r\) on this set defined by 
\[r(h_{j})=\sqrt{\sum_{m\in\mcn_j}1}.\]
Furthermore, we set the resonator
\[R(t) := \sum_{h\in\mch}r(h)h^{-\mit}.\]
The Cauchy-Schwarz inequality implies that
\begin{align}
    \label{t1-Rupper}
    |R(t)|^2 \le R(0)^{2}\le N\sum_{h\in\mch}r(h)^{2}\le N|\mcn|.
\end{align}
As in \cite{Bondarenko2017DukeMJ}, we take \(\Phi(t) := \mme^{-t^2/2}\), whose Fourier transform satisfies \(\widehat{\Phi}(\xi)=\sqrt{2\pi}\Phi(\xi)>0\) for all \(\xi\in\mbr\). According to \cite[Lemma 5]{Bondarenko2018MathAnn}, we obtain
\begin{align}
    \label{t1-intRPbound}
    \int_\mbr |R(t)|^{2}\Phi\Bg(\frac{t\log T}{T}\Bg)\mmd t\ll\frac{|\mcn|T}{\log T}.
\end{align}

Fix \(\varepsilon>0\) such that \(\kappa+3\varepsilon<1\), define the function \(K\) as in Section \ref{t1-sec-lemma}. Next, define
\[\Upsilon(t,y):= H\Bg(\frac{1}{2}+\mit +\mmi y \Bg)H\Bg(\frac{1}{2}-\mit+\mmi y\Bg) K(y)\]
and
\[I(T) := \int_{|t|>0} |R(t)|^{2}\Phi\Bg(\frac{t\log T}{T}\Bg)\int_\mbr \Upsilon(t,y) \mmd y \mmd t.\]
The exclusion of \(t=0\) is only needed in order to apply the double-version convolution formula, that is, Lemma \ref{t1-lem2}. As will be shown later, after the two pole terms are combined, the apparent singularity at \(t=0\) is removable by \eqref{t1-lem1-Laurent}. Taking \(\sigma=1/2\) and \(\eta=1/30\) in Lemma \ref{t1-lem1} \((\mathrm{\romannumeral 2})\), and enlarging the implied constant to cover the range where \(|t\pm y| <1\), we obtain
\begin{align}
    \label{t1-Hbound}
    \Bg|H\Bg(\frac{1}{2}\pm \mit +\mmi y \Bg)\Bg| \ll (1+|t|+|y|)^{3/10}.
\end{align}
Following the approach of \cite{Bondarenko2018MathAnn,delaBreteche2019PLMS,Ddyang2022Mathematika}, we now show that the main contribution to \(I(T)\) comes from the range \(2T^\beta \le |t| \le T/2\) and \(|y| \le |t|/2\). To this end, we first consider \(0<|t|\le2T^\beta\). Uniformly for \(|y| \le T^\beta\), \eqref{t1-lem1-moment} yields that
\begin{align*}
        \int_{|t|\le 2T^\beta}& \Bg|H\Bg(\frac{1}{2}+\mit+\mmi y\Bg)H\Bg(\frac{1}{2}-\mit+\mmi y\Bg)\Bg|\mmd t \\
    &\le\Bg(\int_{|u|\le3T^\beta}\Bg|H\Bg(\frac{1}{2}+\mmi u\Bg)\Bg|^2\mmd u\Bg)^{1/2} \Bg(\int_{|v|\le3T^\beta}\Bg|H\Bg(\frac{1}{2}+\mmi v\Bg)\Bg|^2\mmd v\Bg)^{1/2} \\
    &\ll_q T^\beta \log T.
\end{align*}
Thus, by the non-negativity of \(K\), we obtain
\begin{align}
    \label{t1-tail1-1}
    \Bg|\int_{0<|t|\le 2T^\beta} \int_{|y|\le T^\beta} \Upsilon(t,y) \mmd y \mmd t\Bg| \ll_q T^\beta \log T.
\end{align}
For \(|y| > T^\beta\), by \eqref{t1-Hbound} and the definition of \(K\), we obtain
\begin{align}
    \label{t1-tail1-2}
    \Bg|\int_{0<|t|\le 2T^\beta} &\int_{|y|> T^\beta} \Upsilon(t,y) \mmd y \mmd t \Bg|\nonumber \\ 
    &\ll \int_{0<|t|\le 2T^\beta} \int_{|y|> T^\beta} (1+|t|+|y|)^{3/5}\frac{ \mmd y \mmd t}{(|t|+|y|)^2} \ll T^{3\beta/5}.
\end{align}
Thus, combining \eqref{t1-tail1-1} and \eqref{t1-tail1-2} with \eqref{t1-Rupper} gives
\begin{align}
    \label{t1-tail1}
    \Bg|\int_{0<|t|\le 2T^\beta} |R(t)|^{2}\Phi\Bg(\frac{t\log T}{T}\Bg)& \int_\mbr \Upsilon(t,y) \mmd y \mmd t\Bg| \nonumber\\
    &\ll R(0)^2 T^\beta \log T \ll |\mcn|T^{\beta+\kappa}\log T.
\end{align}
Then, for \(|t| >T/2\), by the exponential decay of \(\Phi\), together with \eqref{t1-Hbound}, it follows that
\begin{align}
    \label{t1-tail2}
    \Bg|\int_{|t|>T/2} |R(t)|^{2}\Phi\Bg(\frac{t\log T}{T}\Bg)\int_\mbr \Upsilon(t,y) \mmd y \mmd t\Bg| = o(|\mcn|).
\end{align}
Finally, applying \eqref{t1-intRPbound} and \eqref{t1-Hbound}, we have
\begin{align}
    \label{t1-tail3}
    \Bg|\int_{2T^\beta \le |t| \le T/2}|R(t)|^{2}\Phi\Bg(\frac{t\log T}{T}\Bg) \int_{|y|>|t|/2}\Upsilon(t,y) \mmd y \mmd t\Bg| \ll|\mcn| \frac{T^{1-2\beta/5}}{(\log T)^2}.
\end{align}
Throughout the above argument, we repeatedly invoked Lemma \ref{t1-lem3} \((\mathrm{\romannumeral 1})\), which states that \(|\mcn|=|\mcm|\). Putting the preceding estimates together, we arrive at
\begin{align*}
    I(T) &= \int_{2T^\beta \le |t| \le T/2}|R(t)|^{2}\Phi\Bg(\frac{t\log T}{T}\Bg) \int_{|y|\le |t|/2}\Upsilon(t,y) \mmd y \mmd t \\
    &\,\,\,+ O\bg(|\mcn|T^{\beta+\kappa} \log T\bg)+O\Bg(|\mcn| \frac{T^{1-2\beta/5}}{(\log T)^2}\Bg).
\end{align*}
Since \(H(\overline{s})=\overline{H(s)}\), we have
\[\Bg|H\Bg(\frac{1}{2}+\mmi u\Bg)\Bg| = \Bg|H\Bg(\frac{1}{2}-\mmi u\Bg)\Bg|.\]
Moreover, in the range \(2T^\beta\le |t|\le T/2\) and \(|y|\le |t|/2\), we have \(T^\beta\le |t\pm y|\le T\). Combining this with \eqref{t1-intRPbound} gives 
\begin{align}
\label{t1-max1}
    I(T)\ll \frac{|\mcn|T}{\log T} \max_{T^\beta \le t \le T}\Bg|H\Bg(\frac{1}{2}+\mit\Bg)\Bg|^2 + O\bg(|\mcn|T^{\beta+\kappa} \log T\bg)+O\Bg(|\mcn| \frac{T^{1-2\beta/5}}{(\log T)^2}\Bg).
\end{align}

Applying Lemma \ref{t1-lem2} with \(\sigma=1/2\) and \(F=K\), we obtain
\begin{align}
    \label{t1-Iequation}
    I(T)=I_1(T)+I_2(T),
\end{align}
where
\[I_1(T)=\int_\mbr q \sum_{\substack{k,\ell \ge 1 \\ k \equiv \ell \equiv a \pmod q}}\frac{\widehat{K}(\log k\ell)}{\sqrt{k\ell}}\Bg(\frac{\ell}{k}\Bg)^{\mit}|R(t)|^{2}\Phi\Bg(\frac{t\log T}{T}\Bg)\mmd t\]
and
\[I_2(T)=-\int_\mbr P(t)|R(t)|^{2}\Phi\Bg(\frac{t\log T}{T}\Bg)\mmd t.\]
Now we show that under \(\kappa+3\varepsilon<1\), we have
\begin{align}
    \label{t1-I2upper}
    |I_2(T)| = o_{q,\varepsilon}\Bg(\frac{|\mcn|T}{\log T}\Bg).
\end{align}
For \(|t|\ge 1\), by \eqref{t1-lem1-bound2}, \eqref{t1-lem5-bound2} and \eqref{t1-Rupper}, we have
\begin{align*}
    \int_{|t|\ge 1}&|P(t)||R(t)|^{2}\Phi\Bg(\frac{t\log T}{T}\Bg)\mmd t \\
    & \ll_{q,\varepsilon} \frac{|\mcn|T^{\kappa+\varepsilon}}{\log T} \int_{|t|\ge 1}\frac{\log(2+t^2)}{1+t^2}\mmd t \ll_{q,\varepsilon} \frac{|\mcn|T^{\kappa+\varepsilon}}{\log T}.
\end{align*}
For \(0<|t|<1\), by \eqref{t1-lem1-Laurent}, 
\[H(1\mp 2\mit)=\pm \frac{\mmi}{2\sqrt{q}t}+G_{a,q}(\mp 2\mit).\]
Hence, the only potentially singular contribution to \(P(t)\) is a constant multiple of
\[\frac{1}{t}\Bg(K\Bg(t-\frac{\mmi}{2}\Bg)-K\Bg(-t-\frac{\mmi}{2}\Bg)\Bg).\]
By the mean value theorem and the derivative estimate for \(K\) in Lemma \ref{t1-lem5}, we have
\[\Bg|\frac{1}{t}\Bg(K\Bg(t-\frac{\mmi}{2}\Bg)-K\Bg(-t-\frac{\mmi}{2}\Bg)\Bg)\Bg|\le 2\sup_{|u|\le1}\Bg|K^\prime\Bg(u-\frac{\mmi}{2}\Bg)\Bg| \ll_\varepsilon T^\varepsilon.\]
Since \(G_{a,q}\) is holomorphic in a neighborhood of \(0\), the remaining terms are also \(O_{q,\varepsilon}(T^\varepsilon)\). Therefore, for \(0<|t|<1\), \(P(t)\ll_{q,\varepsilon}T^\varepsilon\). Consequently, by \eqref{t1-Rupper} and \(\kappa+3\varepsilon<1\), we get
\[\int_{0<|t|< 1}|P(t)||R(t)|^{2}\Phi\Bg(\frac{t\log T}{T}\Bg)\mmd t= o_{q,\varepsilon}\Bg(\frac{|\mcn|T}{\log T}\Bg).\]
It follows that \eqref{t1-I2upper} holds from the above estimates.

It remains to estimate \(I_1(T)\). Expanding \(|R(t)|^2\) implies that
\[I_1(T)=\sqrt{2\pi}\frac{T}{\log T}\sum_{h,g \in \mch}r(h)r(g)  \sum_{\substack{k,\ell \ge 1 \\ k \equiv \ell \equiv a \pmod q}}\frac{q\widehat{K}(\log k\ell)}{\sqrt{k\ell}}\Phi\Bg(\frac{T}{\log T}\log \frac{g\ell}{hk}\Bg). \]
Since \(\widehat K(\log k\ell)=0\) for \(k\ell>T^{2\varepsilon}\), the inner sum is finite. As all terms are non-negative, we may restrict to \(k\ell \le T^\varepsilon\). In fact, by \eqref{t1-lem5-bound1}, we have
\[I_1(T) \gg\frac{T}{\log T}\sum_{\substack{k,\ell \ge 1,k\ell \le T^\varepsilon \\ k \equiv \ell \equiv a \pmod q}}\frac{q}{\sqrt{k\ell}}\sum_{h,g \in \mch}r(h)r(g) \Phi\Bg(\frac{T}{\log T}\log \frac{g\ell}{hk}\Bg). \]
Following \cite{delaBreteche2019PLMS}, let \(x\in\mcn_i\) and \(y\in\mcn_j\), and suppose that \(xk=y\ell\). Then set \(h_i = \min\mcn_i\) and \(h_j = \min\mcn_j\). From the definition of the sets \(\mcn_i\) and \(\mcn_j\), we obtain
\[1\le \frac{x}{h_i}<1+\frac{\log T}{T},\quad1\le \frac{y}{h_j}<1+\frac{\log T}{T}.\]
Thus, \(xk=y\ell\) yields 
\[\Bg|\frac{T}{\log T}\log\frac{h_j\ell}{h_ik}\Bg|\le 2\frac{T}{\log T}\log\Bg(1+\frac{\log T}{T}\Bg)\le2.\]
Furthermore,
\[\Phi\Bg(\frac{T}{\log T}\log\frac{h_j\ell}{h_ik}\Bg)\gg 1.\]
For fixed \(k,\ell,i,j\), the number of pairs \((x,y) \in \mcn_i\times \mcn_j\) satisfying \(xk=y\ell\) is at most
\[\min\{|\mcn_i|,|\mcn_j|\} \le \sqrt{|\mcn_i||\mcn_j|}=r(h_i)r(h_j).\]
It follows that
\[I_1(T) \gg\frac{T}{\log T}\sum_{x,y\in\mcn}\sum_{\substack{k,\ell \ge 1,k\ell \le T^\varepsilon\\ xk=y\ell \\ k \equiv \ell \equiv a \pmod q}}\frac{q}{\sqrt{k\ell}}.\]

Applying Lemma \ref{t1-lem4} and using the construction of \(k\) and \(\ell\), we deduce that
\[I_1(T) \gg_q \frac{T}{\log T} \sum_{\substack{x,y\in\mcn \\ [x,y]/(x,y)\le T^\varepsilon/q^2}} Q(x,y). \]
Furthermore, Rankin's trick shows that
\begin{align}
    \label{t1-I1lower}
    I_1(T) \gg\frac{T}{\log T}\bg(S(\mcn) -C_qT^{-\varepsilon/6}S_{1/3}(\mcn)\bg).
\end{align}
Here, \(C_q\) denotes a constant that depends only on \(q\). Let \(Y\) denote the largest prime factor occurring among the elements of \(\mcn\). Choose and fix \(0<\nu<1/2\). By the construction of \(\mcn\), and since \(q\) is fixed, we have \(Y \le (\log T)^{1+\nu}\) for sufficiently large \(T\). Proceeding as in \cite[p. 25]{delaBreteche2019PLMS}, we obtain
\[\frac{S_{1/3}(\mcn)}{|\mcn|} \ll_q \exp\bg(Y^{2/3}\bg).\]
Since \(\nu<1/2\), \(Y^{2/3} \le (\log T)^{2(1+\nu)/3} =o(\log T)\), and thus, we have \(\exp\bg(Y^{2/3}\bg)=T^{o(1)}\). It follows that
\[C_qT^{-\varepsilon/6}S_{1/3}(\mcn) \ll |\mcn|T^{-\varepsilon/6}\exp\bg(Y^{2/3}\bg)=|\mcn|T^{-\varepsilon/6+o(1)}=o(|\mcn|).\]
Therefore, \eqref{t1-I1lower} yields
\begin{align}
    \label{t1-I1lower2}
    I_1(T) \gg_q \frac{T}{\log T}S(\mcn).
\end{align}

Combining \eqref{t1-max1}, \eqref{t1-Iequation}, \eqref{t1-I2upper} and \eqref{t1-I1lower2}, we have
\begin{align}
    \label{t1-max2}
    \max_{T^\beta \le t \le T}\Bg|H\Bg(\frac{1}{2}+\mit\Bg)\Bg|^2 \gg_q \frac{S(\mcn)}{|\mcn|}
\end{align}
for sufficiently large \(T\). Using \eqref{t1-gcdsum-M} and \eqref{t1-lem3-gcdsum-X}, together with \eqref{t1-max2}, we obtain
\[\max_{T^\beta \le t \le T}\Bg|H\Bg(\frac{1}{2}+\mit\Bg)\Bg| \ge \exp\bgg((\sqrt{2}-\delta)\sqrt{\frac{\log N \lt N}{\lo N}}\bgg).\]
Recalling that \(N=\lfloor T^\kappa\rfloor\), we have
\[\max_{T^\beta \le t \le T}\Bg|H\Bg(\frac{1}{2}+\mit\Bg)\Bg| \ge\exp\bgg(\bg(\bg(\sqrt{2}-\delta\bg)\sqrt{\kappa}+o(1)\bg)\sqrt{\frac{\log T \lt T}{\lo T}}\bgg).\]
Since \(c<\sqrt{2\kappa}\), we may choose \(\delta>0\) sufficiently small so that,
\[c<\bg(\sqrt{2}-\delta\bg)\sqrt{\kappa}.\]
Hence, for sufficiently large \(T\),
\[\max_{T^\beta \le t \le T}\Bg|H\Bg(\frac{1}{2}+\mit\Bg)\Bg| \ge\exp\bgg(c\sqrt{\frac{\log T \lt T}{\lo T}}\bgg).\]
Finally, by \eqref{t1-moduloeq}, we complete the proof of Theorem \ref{thm1}.

\section{Proof of Theorem \ref{thm2}}
\label{sec-pthm2}
In this section, we prove Theorem \ref{thm2} by using the long resonance method in \cite{Ddyang2023arxiv}.

\subsection{An auxiliary lemma}
\label{t2-sec-lemma}
We first establish a uniform truncation formula for the Hurwitz zeta function. This result relies on the properties of the periodic Bernoulli function and its Fourier transform.
\begin{lem}
    \label{t2-lem1}
Let \(\sigma_0>0\) and \(A>1\) be fixed, and put \(N=\lfloor T^A\rfloor\). Then there exists a constant \(\delta=\delta(\sigma_0,A)>0\) such that, uniformly for \(\sigma\ge\sigma_0\), \(1\le |t|\le T\) and \(0 <\alpha \le 1\), we have
\[\zeta(s,\alpha)=\sum_{n=0}^{N-1} \frac{1}{(n+\alpha)^s}+\frac{(N+\alpha)^{1-s}}{s-1}+O_{\sigma_0,A}(T^{-\delta}),\]
where \(s=\sigma+\mit\).
\end{lem}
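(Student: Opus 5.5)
We plan to use Euler--Maclaurin summation to first order, written with the periodic Bernoulli function \(\psi(u)=\{u\}-1/2\). For \(\sigma>1\) we have
\[\zeta(s,\alpha)=\sum_{n=0}^{N-1}(n+\alpha)^{-s}+\sum_{n\ge N}(n+\alpha)^{-s}.\]
Applying Euler--Maclaurin to the tail (the function \(u\mapsto (u+\alpha)^{-s}\) on \([N,\infty)\)) gives
\[\sum_{n\ge N}(n+\alpha)^{-s}=\frac{(N+\alpha)^{1-s}}{s-1}+\frac12 (N+\alpha)^{-s}-s\int_N^\infty \frac{\psi(u)}{(u+\alpha)^{s+1}}\,\mmd u .\]
The right-hand side is holomorphic for \(\sigma>0\), \(s\ne1\), so by analytic continuation the identity holds for all \(\sigma\ge\sigma_0\). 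The term \(\frac12(N+\alpha)^{-s}\) is \(O(N^{-\sigma_0})=O(T^{-A\sigma_0})\), uniformly in \(\alpha\in(0,1]\) since \(N+\alpha\ge N\). It therefore suffices to bound
\[E:=s\int_N^\infty \psi(u)(u+\alpha)^{-s-1}\,\mmd u \]
by \(T^{-\delta}\), uniformly in the stated ranges.

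The trivial bound \(|E|\le |s|\int_N^\infty (u+\alpha)^{-\sigma-1}\,\mmd u\le |s| N^{-\sigma}/\sigma\) is not enough. For \(\sigma\) bounded we have \(|s|\ll T\) and \(N^{-\sigma}\approx T^{-A\sigma_0}\), which wins only when \(A\sigma_0>1\). In general we must exploit the oscillation of \(\psi\). The plan is to integrate by parts once more with \(\psi_2(u)=\int_0^u\psi\), which is periodic, bounded and has mean zero (up to a constant, the second periodic Bernoulli function \(\tfrac12 B_2(\{u\})\)). This gives
\[E = -s\,\psi_2(N)(N+\alpha)^{-s-1}-s(s+1)\int_N^\infty \psi_2(u)(u+\alpha)^{-s-2}\,\mmd u,\]
so \(E\ll |s|N^{-\sigma-1}+|s|^2N^{-\sigma-1}/(\sigma+1)\). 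Repeating this \(m\) times yields \(E\ll_m |s|^m N^{-\sigma-m+1}\).

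To handle large \(\sigma\), note that \(|s|\le \sigma+T\). If \(\sigma\ge 2\), say, one checks directly that \(|s|^m N^{-\sigma-m+1}\) is tiny, because \(N^{-\sigma}\) beats any power of \(\sigma+T\). For \(\sigma\in[\sigma_0,2]\) we have \(|s|\ll T\), so
\[E\ll T^m\, T^{-A(\sigma_0+m-1)}=T^{\,m(1-A)-A(\sigma_0-1)}.\]
Since \(A>1\), choosing \(m\) large (depending on \(\sigma_0,A\)) makes this exponent negative, and we take \(\delta\) to be the minimum of that gain and \(A\sigma_0\).

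The main obstacle is making this uniform in \(\sigma\) and \(\alpha\). The factor \(|s|^m\) could in principle grow with \(\sigma\), so for large \(\sigma\) one should bound \(|s(s+1)\cdots(s+m-1)|\le (\sigma+m+T)^m\) and absorb it using \(N^{-\sigma}\le N^{-\sigma_0}\,N^{-(\sigma-\sigma_0)}\). Alternatively, for \(\sigma\ge \sigma_0+1\) one can skip the argument entirely: bound the whole tail trivially by \(\sum_{n\ge N}(n+\alpha)^{-\sigma}\) together with \(|(N+\alpha)^{1-s}/(s-1)|\le N^{1-\sigma}\), both of which are \(\ll N^{-\sigma_0}\). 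Uniformity in \(\alpha\) is automatic, because every bound uses only \(u+\alpha\ge u\ge N\). The hypothesis \(|t|\ge1\) is only needed so that \(|s-1|\ge1\) in the main term, keeping it away from the pole.
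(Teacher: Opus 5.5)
Your proposal is correct and follows essentially the same route as the paper. Both arguments use first-order Euler--Maclaurin (the paper cites Apostol's Theorem 12.21), then repeated integration by parts against the mean-zero periodic Bernoulli functions \(\widetilde{B}_k/k!\), gaining a factor \(|s|/N\ll T^{1-A}\) per step, with the number of steps fixed in terms of \(\sigma_0\) and \(A\) and large \(\sigma\) absorbed via \(N^{-(\sigma-\sigma_0)}\).

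One small slip: your summary bound \(E\ll_m |s|^m N^{-\sigma-m+1}\) drops the intermediate boundary terms of size \(|s|^jN^{-\sigma-j}\). These are not dominated by the final remainder, and the paper keeps them explicitly. Each is \(\ll T^{-A\sigma_0}\) in the relevant range, however, so your conclusion stands.
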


\begin{proof}
For \(z \in \mbc\) and \(j \ge 0\), set \((z)_0=1\) and \((z)_j = z(z+1)\cdots(z+j-1)\). Moreover, let \(B_j(x)\) be the \(j\)-th Bernoulli polynomial and let \(\widetilde{B}_j(x) = B_j(x-\lfloor x \rfloor)\) be its periodic version. Choose an integer \(M\ge 1\) sufficiently large such that
\[A\sigma_0+2M(A-1)>1.\]
By \cite[Chapter 12, Theorem 12.21]{Apostol1976}, we have
\begin{align}
    \label{t2-lem1-Euler}
    \zeta(s,\alpha)=\sum_{n=0}^{N-1}\frac{1}{(n+\alpha)^s}+\frac{(N+\alpha)^{1-s}}{s-1}+\frac{1}{2(N+\alpha)^s}-s\int_N^\infty\frac{\widetilde{B}_1(x)}{(x+\alpha)^{s+1}}\mmd x.
\end{align}

For \(k\ge 1\), define
\[I_k = \frac{1}{k!}\int_N^\infty\frac{\widetilde{B}_k(x)}{(x+\alpha)^{s+k}}\mmd x.\]
Since
\[\frac{\mmd}{\mmd x}\Bg(\frac{\widetilde{B}_{k+1}(x)}{(k+1)!}\Bg) =\frac{\widetilde{B}_k(x)}{k!},\]
integration by parts gives
\begin{align}
    \label{t2-lem1-Ik}
    I_k = -\frac{B_{k+1}}{(k+1)!(N+\alpha)^{s+k}} +(s+k)I_{k+1}.
\end{align}
Iterating \eqref{t2-lem1-Ik}, and using
\[B_{2j+1}=0 \qquad (j\ge 1),\]
we obtain
\[-sI_1 = \sum_{j=1}^M \frac{B_{2j}}{(2j)!}(s)_{2j-1} \frac{1}{(N+\alpha)^{s+2j-1}}+E_M,\]
where
\[E_M := E_M(s,\alpha,N)= - \frac{(s)_{2M+1}}{(2M+1)!}\int_N^\infty\frac{\widetilde{B}_{2M+1}(x)}{(x+\alpha)^{s+2M+1}}\mmd x.\]
Combining this with \eqref{t2-lem1-Euler} gives
\begin{align}
    \label{t2-lem1-Euler2}
    \zeta(s,\alpha)&=\sum_{n=0}^{N-1}\frac{1}{(n+\alpha)^s}+\frac{(N+\alpha)^{1-s}}{s-1}
    +\frac{1}{2(N+\alpha)^s} \nonumber \\
    &\quad+\sum_{j=1}^M \frac{B_{2j}(s)_{2j-1}}{(2j)!(N+\alpha)^{s+2j-1}}+E_M.
\end{align}

We now estimate the last three terms in \eqref{t2-lem1-Euler2}. Since \(N+\alpha\asymp T^A\) uniformly for \(0<\alpha\le 1\), we first have
\begin{align}
    \label{t2-lem1-error1}
    \Bg|\frac{1}{2(N+\alpha)^s}\Bg| \ll T^{-A\sigma} \le T^{-A\sigma_0}.
\end{align}
For \(1\leq j\leq M\), put \(u=2j-1\). Since \(|(s)_u| \ll_u T^u+(1+\sigma)^u\), we obtain
\[|(s)_u|(N+\alpha)^{-\sigma-u} \ll_u T^u(N+\alpha)^{-\sigma-u} +(1+\sigma)^u(N+\alpha)^{-\sigma-u}.\]
The first term on the right-hand side above satisfies
\[T^u(N+\alpha)^{-\sigma-u} \ll T^{-A\sigma_0-u(A-1)}.\]
On the other hand, for the second term, since \(N+\alpha\ge 2\) for sufficiently large \(T\),
\[(1+\sigma)^u(N+\alpha)^{-(\sigma-\sigma_0)} \ll_{u,\sigma_0} 1.\]
Hence,
\[(1+\sigma)^u(N+\alpha)^{-\sigma-u} \ll_{u,\sigma_0} T^{-A\sigma_0-Au}.\]
Combining the above estimates, we get
\begin{align}
    \label{t2-lem1-error2}
    \sum_{j=1}^M \frac{B_{2j}(s)_{2j-1}}{(2j)!(N+\alpha)^{s+2j-1}} \ll_{\sigma_0,A} T^{-A\sigma_0-(A-1)}.
\end{align}
It remains to estimate \(E_M\). The Fourier series of the periodic Bernoulli function gives
\[\bg|\widetilde{B}_{2M+1}(x)\bg| \le \frac{2(2M+1)!}{(2\pi)^{2M+1}}\zeta(2M+1).\]
Therefore,
\[|E_M|\ll_M |(s)_{2M+1}|(N+\alpha)^{-\sigma-2M}.\]
Using the same argument as above, we obtain
\[|E_M| \ll_{\sigma_0,A} T^{2M+1-A(\sigma_0+2M)} + T^{-A\sigma_0-2MA}.\]
Since
\[2M+1-A(\sigma_0+2M) = -(A\sigma_0+2M(A-1)-1)<0,\]
there exists a constant \(\delta=\delta(\sigma_0,A)>0\), such that
\begin{align}
    \label{t2-lem1-error3}
    |E_M| \ll_{\sigma_0,A} T^{-\delta}.
\end{align}
After decreasing \(\delta\) if necessary, we may assume that \(0<\delta<A\sigma_0\). Substituting \eqref{t2-lem1-error1}, \eqref{t2-lem1-error2} and \eqref{t2-lem1-error3} into \eqref{t2-lem1-Euler2} proves
\[\zeta(s,\alpha)=\sum_{n=0}^{N-1} \frac{1}{(n+\alpha)^s}+\frac{(N+\alpha)^{1-s}}{s-1}+O_{\sigma_0,A}(T^{-\delta})\]
and the proof is complete.
\end{proof}

\subsection{Proof of Theorem \ref{thm2}}
\label{t2-sec-prove}

Let
\[\lambda(\sigma) := \int_ 0^1 \frac{\mmd t}{2t^{-\sigma}-1}.\]
Fix a positive real number \(\kappa\) such that 
\[0<\kappa <\frac{\sigma-1/2}{\sigma(1+\lambda(\sigma))}.\]
Choosing \(1<A<(2(1-\sigma))\inv\) and sufficiently close to \(1\) so that 
\[\kappa\sigma(1+\lambda(\sigma))<\frac{1}{2}-A(1-\sigma).\]
Then, for large \(T\), put \(X=\kappa\log T \lo T\). Define \(r(n)\) to be a completely multiplicative function whose values at primes are given by
\begin{equation*}
    r(p) =
    \begin{cases}
        1- (p/X)^\sigma, ~ &\operatorname{if} p \le X, \\
		0, ~ &\operatorname{if} p > X.
    \end{cases}
\end{equation*}
Furthermore, define 
\[
R(t) := \prod_{p \le X}\bg(1-r(p)p^{\mit}\bg)\inv = \sum_{n\ge 1} r(n)n^{\mit}.
\]
The prime number theorem yields that
\begin{align}
    \label{t2-Rupper}
    |R(t)|^2 \le T^{2\kappa\sigma+o(1)}.
\end{align}

With the choice of \(A\), put \(N=\lfloor T^A \rfloor\). Applying Lemma \ref{t2-lem1} with \(\sigma_0=\sigma\) and \(\alpha=a/q\), we obtain, uniformly for \(\sqrt{T} \le t \le T\), that
\begin{align}
    \label{t2-Euler3}
    \zeta\Bg(\sigma+\mit,\frac{a}{q}\Bg) = \sum_{n=0}^{N-1}\frac{1}{(n+a/q)^{\sigma+\mit}}+\frac{(N+a/q)^{1-\sigma-\mit}}{\sigma-1+\mit}+O_{\sigma,A}\bg(T^{-\delta}\bg)
\end{align}
for some \(\delta>0\). Since
\[\Bg(n+\frac{a}{q}\Bg)^{-\sigma-\mit} = q^{\sigma+\mit}(q n+a)^{-\sigma-\mit},\]
and the integers \(qn+a\), \(0\le n<N\), are precisely the positive integers \(k\le qN\) satisfying \(k\equiv a\pmod q\), it follows that
\begin{align}
    \label{t2-equation}
    \sum_{n=0}^{N-1}\frac{1}{(n+a/q)^{\sigma+\mit}} = q^{\sigma+\mit} \sum_{\substack{k \le qN \\ k \equiv a \pmod q}} \frac{1}{k^{\sigma+\mit}}.
\end{align}
Moreover, since \(|t| \ge \sqrt{T}\), we get
\[\Bg|\frac{(N+a/q)^{1-\sigma-\mit}}{\sigma-1+\mit}\Bg|\ll_{\sigma,A}T^{A(1-\sigma)-1/2}.\]
Since \(A(1-\sigma)<1/2\), after decreasing \(\delta\) if necessary, \eqref{t2-Euler3} and \eqref{t2-equation} yield, uniformly for \(\sqrt{T} \le t \le T\), 
\begin{align}
    \label{t2-Euler4}
    \zeta\Bg(\sigma+\mit,\frac{a}{q}\Bg) = D(t)+ O_{\sigma,A}\bg(T^{-\delta}\bg),
\end{align}
where
\[D(t):= D(t,q,N) =  q^{\sigma+\mit} \sum_{\substack{k \le qN \\ k \equiv a \pmod q}} \frac{1}{k^{\sigma+\mit}}.\]
Trivially, we have
\begin{align}
    \label{t2-Dupper}
    |D(t)| \le q^\sigma \sum_{k \le qN} \frac{1}{k^\sigma} \ll_\sigma q^\sigma(qN)^{1-\sigma} \ll_{q,\sigma} T^{A(1-\sigma)+o(1)}.
\end{align}

We now apply the long resonator method to \(D(t)\). To this end, define the following four integrals:
\begin{align*}
    &U_1 :=U_1(R,T) = \int_{\sqrt{T}}^T |R(t)|^2 \Phi\Bg(\frac{\log T}{T}t\Bg) \mdt, \\
    &V_1 :=V_1(R,T) = \int_\mbr |R(t)|^2 \Phi\Bg(\frac{\log T}{T}t\Bg) \mdt, \\
    &U_2 :=U_2(R,T) =\int_{\sqrt{T}}^T D(t) |R(t)|^2 \Phi\Bg(\frac{\log T}{T}t\Bg) \mdt, \\
    &V_2 :=V_2(R,T) =\int_\mbr D(t)|R(t)|^2 \Phi\Bg(\frac{\log T}{T}t\Bg) \mdt.
\end{align*}
Throughout this section, let \(\Phi\) be the Gaussian function introduced in Section \ref{sec-pthm1}. The rapid decay of \(\Phi\) ensures the absolute convergence of all sums and integrals occurring below. Since \(D(-t)=\overline{D(t)}\), \(R(-t)=\overline{R(t)}\) and \(\Phi\) is even, the contributions from the positive and negative ranges are conjugate to each other. Furthermore, it is clear that we have
\begin{align}
    \label{t2-max1}
    \max_{\sqrt{T}\le t\le T} |D(t)| \ge \frac{|U_2|}{U_1} \ge \frac{\re U_2}{U_1}.
\end{align}

Combining \eqref{t2-Rupper} with \eqref{t2-Dupper} gives that
\[\Big| \int_{|t|\le \sqrt{T}} D(t) |R(t)|^2\Phi\Big(\frac{\log T}{T}t \Big)\mathrm{d}t\Big| \ll T^{2\kappa\sigma+A(1-\sigma)+1/2+ o(1)}.\]
Moreover, it follows from the rapid decay of \(\Phi(t)\) that
\[\Big| \int_{|t|\ge T } D(t) |R(t)|^2 \Phi\Big(\frac{\log T}{T}t \Big)\mathrm{d}t\Big| \ll 1.\]
Combining the two upper bounds above shows that
\begin{align}
    \label{t2-U2V2eq}
    2\re U_2 = V_2 + O\bg(T^{2\kappa\sigma+A(1-\sigma)+1/2+ o(1)} \bg).
\end{align}
Similarly, we have
\begin{align}
    \label{t2-U1V1eq}
    2U_1=V_1+ O\bg(T^{2\kappa\sigma+ 1/2+ o(1)} \bg).
\end{align}
The argument in \cite[pp. 78-79]{Dong2022phd} gives the following lower bound for \(V_1\):
\begin{equation}
    \label{t2-V1lower}
    V_1 = \int_\mbr \sum_{m,n \ge 1} r(m)r(n) \Bg(\frac{m}{n} \Bg)^{\mit}\Phi\Bg(\frac{\log T}{T}t \Bg)\mmd t  \gg T^{\kappa\sigma(1-\lambda(\sigma))+1+o(1)}.
\end{equation}
Moreover, by \eqref{t2-Dupper}, 
\[|V_2| \le T^{A(1-\sigma)+o(1)}V_1.\]
Hence, using \eqref{t2-max1}, \eqref{t2-U2V2eq}, \eqref{t2-U1V1eq} and \eqref{t2-V1lower}, we obtain
\begin{align}
    \label{t2-max2}
    \max_{\sqrt{T}\le t\le T} |D(t)| \ge\frac{V_2}{V_1} +O\bg(T^{\kappa\sigma(1+\lambda(\sigma))+A(1-\sigma) -1/2+ o(1)} \bg).
\end{align}
Recall the construction of \(\kappa\), we have
\begin{align}
    \label{t2-kappa}
        2\kappa\sigma+A(1-\sigma)+\frac{1}{2} < \kappa\sigma(1-\lambda(\sigma))+1.
\end{align}
Thus, by \eqref{t2-kappa}, the error term on the right-hand side of \eqref{t2-max2} is \(o(1)\). It therefore remains to obtain an effective lower bound for the ratio \(V_2/V_1\).

Expanding \(D(t)\) and \(|R(t)|^2\) in the definition of \(V_2\), we obtain
\[V_2 = \int_\mbr q^{\sigma+\mit} \sum_{\substack{k \le qN \\ k \equiv a \pmod q}}\frac{1}{k^{\sigma+\mit}}\sum_{m,n\ge 1}r(m)r(n)\Big(\frac{m}{n} \Big)^{\mit}\Phi\Big(\frac{\log T}{T}t \Big)\mmd t.\]
Since \(D(t)\) is a finite Dirichlet polynomial, \(R(t)\) is absolutely convergent, and \(\Phi\) decays rapidly, we may interchange the order of summation and integration. Hence
\[V_2 = \sum_{\substack{k \le qN \\ k \equiv a \pmod q}}\frac{q^\sigma}{k^\sigma} \sum_{m,n\ge 1}r(m)r(n) \int_\mbr \Big(\frac{qm}{kn} \Big)^{\mit}\Phi\Big(\frac{\log T}{T}t \Big)\mmd t.\]
Since \(r(n)\ge0\) and \(\whp(\xi) \ge 0\) for all \(\xi\in\mbr\), we may therefore retain only the terms for which \(m=kd\) and \(n=q\ell\) with \(d,\ell\ge1\). It follows that
\[V_2 \ge q^\sigma\sum_{\substack{k \le qN \\ k \equiv a \pmod q}}\frac{1}{k^\sigma}\sum_{d,\ell \ge 1}r(kd)r(q\ell)\int_\mbr \Big(\frac{qkd}{kq\ell} \Big)^{\mit}\Phi\Big(\frac{\log T}{T}t \Big)\mmd t,\]
Since \(r(n)\) is completely multiplicative, we have
\begin{align*}
    V_2 &\ge r(q)q^\sigma\sum_{\substack{k \le qN \\ k \equiv a \pmod q}}\frac{r(k)}{k^\sigma}\int_\mbr\sum_{d,\ell \ge 1} r(d)r(\ell) \Bg(\frac{d}{\ell}\Bg)^{\mit}\Phi\Big(\frac{\log T}{T}t \Big)\mathrm{d}t \\
    &=r(q)q^\sigma\sum_{\substack{k \le qN \\ k \equiv a \pmod q}}\frac{r(k)}{k^\sigma} \cdot V_1.
\end{align*}
Thus, 
\begin{align}
    \label{t2-V2V1ratiolower}
    \frac{V_2}{V_1} \ge r(q)q^\sigma \sum_{\substack{k \le qN \\ k \equiv a \pmod q}}\frac{r(k)}{k^\sigma}.
\end{align}

We next remove the restriction \(k\leq qN\) in \eqref{t2-V2V1ratiolower}. Choose \(\eta \in (0,\sigma-1/2)\). By Rankin's trick,
\[\sum_{k > qN}\frac{r(k)}{k^\sigma} \le (qN)^{-\eta}\sum_{k > qN}\frac{r(k)}{k^{\sigma-\eta}} \le (qN)^{-\eta}\sum_{k \ge 1}\frac{r(k)}{k^{\sigma-\eta}}.\]
Since \(r(n)\) is completely multiplicative and is supported on integers whose prime factors are at most \(X\), we have
\[\sum_{k \ge 1}\frac{r(k)}{k^{\sigma-\eta}} = \prod_{p \le X}\Bg(1-\frac{r(p)}{p^{\sigma-\eta}}\Bg)\inv.\]
Since \(\sigma-\eta>1/2\), it follows that
\begin{align*}
    \log \sum_{k \ge 1}\frac{r(k)}{k^{\sigma-\eta}} & =\sum_{p\le X}\frac{r(p)}{p^{\sigma-\eta}} +O_{\sigma,\eta}\Bg(\sum_{p \le X}\frac{1}{p^{2(\sigma-\eta)}}\Bg) \\
    &=\sum_{p \le X}\frac{1}{p^{\sigma-\eta}}-\frac{1}{X^\sigma}\sum_{p\le X}p^\eta +O_{\sigma,\eta}(1). 
\end{align*}
The prime number theorem yields that
\[\sum_{p \le X}\frac{1}{p^{\sigma-\eta}} = \Bg(\frac{1}{1-\sigma+\eta}+o(1)\Bg)\frac{X^{1-\sigma+\eta}}{\log X}\]
and
\[\frac{1}{X^\sigma}\sum_{p\le X}p^\eta = \Bg(\frac{1}{1+\eta}+o(1)\Bg)\frac{X^{1-\sigma+\eta}}{\log X}.\]
Consequently,
\[\log \sum_{k \ge 1}\frac{r(k)}{k^{\sigma-\eta}} =\Bg(\frac{1}{1-\sigma+\eta}-\frac{1}{1+\eta}+o(1)\Bg)\frac{X^{1-\sigma+\eta}}{\log X}.\]
Using the definition of \(X\) and \(1-\sigma+\eta<1/2\), we obtain
\[\sum_{k \ge 1}\frac{r(k)}{k^{\sigma-\eta}}=T^{o(1)}.\]
Since \(N=\lfloor T^A \rfloor\) and \(q\) is fixed, it follows that
\[\sum_{k>qN}\frac{r(k)}{k^\sigma}\ll T^{-A\eta+o(1)}.\]
Therefore,
\begin{align*}
    0 \le &\Bg( \sum_{\substack{k \ge 1 \\ k \equiv a \pmod q}} - \sum_{\substack{k \le qN \\ k \equiv a \pmod q}} \Bg)\frac{r(k)}{k^\sigma} \\
    & = \sum_{\substack{k > qN \\ k \equiv a \pmod q}}\frac{r(k)}{k^\sigma} \le \sum_{k >qN}\frac{r(k)}{k^\sigma}\ll T^{-A\eta+o(1)}.
\end{align*}
Since \(a\) is fixed and, for sufficiently large \(T\), \(r(a) \asymp_{a,\sigma} 1\), we have
\[\sum_{\substack{k \ge 1 \\ k \equiv a \pmod q}}\frac{r(k)}{k^\sigma} \gg_{a,\sigma} 1.\]
Hence,
\begin{align}
\label{t2-sumpmodeq}
    \sum_{\substack{k \le qN \\ k \equiv a \pmod q}}\frac{r(k)}{k^\sigma} = \bg(1+o(1)\bg)\sum_{\substack{k \ge 1 \\ k \equiv a \pmod q}}\frac{r(k)}{k^\sigma}.
\end{align}
Combining \eqref{t2-V2V1ratiolower} with \eqref{t2-sumpmodeq}, we obtain
\begin{align}
    \label{t2-V2V1ratiolower2}
    \frac{V_2}{V_1} \ge \bg(1+o(1)\bg)r(q)q^\sigma\sum_{\substack{k \ge 1 \\ k \equiv a \pmod q}}\frac{r(k)}{k^\sigma}.
\end{align}

We now estimate the sum on the right-hand side of \eqref{t2-V2V1ratiolower2}. By the orthogonality of Dirichlet characters,
\[\sum_{\substack{k \ge 1 \\ k \equiv a \pmod q}}\frac{r(k)}{k^\sigma} =\frac{1}{\phi(q)}\sum_{\chi \pmod q}\overline{\chi(a)}\sum_{k \ge1}\frac{r(k)\chi(k)}{k^\sigma}.\]
For convenience, define
\[Q_\sigma(\chi,X) := \sum_{k \ge 1}\frac{r(k)\chi(k)}{k^\sigma} = \prod_{p \le X}\Bg(1-\frac{\chi(p)r(p)}{p^\sigma}\Bg)\inv.\]
Since \((a,q)=1\), we have \(\chi_0(a)=1\), and hence
\begin{align}
    \label{t2-sumdes}
    \sum_{\substack{k \ge 1 \\ k \equiv a \pmod q}}\frac{r(k)}{k^\sigma} =\frac{1}{\phi(q)}\Bg( Q_\sigma(\chi_0,X)+\sum_{\substack{\chi \pmod q \\ \chi \neq \chi_0}}\overline{\chi(a)} Q_\sigma(\chi,X) \Bg).
\end{align}
We first consider the contribution from the principal character \(\chi_0\). By the definition of \(r(n)\), we have
\[Q_\sigma(\chi_0,X) = \prod_{\substack{p \le X \\ p \nmid q}}\Bg(1-\frac{r(p)}{p^\sigma}\Bg)\inv =\prod_{\substack{p \le X \\ p \nmid q}}\Bg(1-\frac{1}{p^\sigma}+\frac{1}{X^\sigma}\Bg)\inv.\]
Since \(\sigma \in(1/2,1)\), we have \(\sum_p p^{-2\sigma}<\infty\). Therefore, 
\[\log Q_\sigma(\chi_0,X) = \sum_{\substack{p \le X \\ p \nmid q}}\Bg(\frac{1}{p^\sigma}-\frac{1}{X^\sigma}\Bg)+O_\sigma(1) = \sum_{p \le X}\Bg(\frac{1}{p^\sigma}-\frac{1}{X^\sigma}\Bg)+O_{\sigma,q}(1),\]
where in the last step we used the fact that \(q\) is fixed. Using the prime number theorem and partial summation,
\[\sum_{p\le X}\frac{1}{p^\sigma} = \Bg(\frac{1}{1-\sigma}+o_\sigma(1)\Bg)\frac{X^{1-\sigma}}{\log X},\]
while
\[\sum_{p \le X}\frac{1}{X^\sigma}=\bg(1+o(1)\bg)\frac{X^{1-\sigma}}{\log X}.\]
It follows that
\[\log Q_\sigma(\chi_0,X) =\Bg(\frac{\sigma}{1-\sigma}+o_{\sigma,q}(1)\Bg)\frac{X^{1-\sigma}}{\log X}.\]
Hence,
\begin{align}
    \label{t2-Qchi0}
    Q_\sigma(\chi_0,X) = \exp \Bg(\Bg(\frac{\sigma}{1-\sigma}+o_{\sigma,q}(1)\Bg)\frac{X^{1-\sigma}}{\log X}\Bg).
\end{align}
We next consider the contribution from the non-principal characters. Let
\(\chi \neq \chi_0 \pmod q\). By the definition of \(Q_\sigma(\chi,X)\), we have
\begin{align}
\label{t2-logQchi}
    \log |Q_\sigma(\chi,X)| &= \re \sum_{p \le X} \log \Bg(1-\frac{\chi(p)r(p)}{p^\sigma}\Bg)\inv = \re \sum_{p \le X}\frac{\chi(p)r(p)}{p^\sigma}+O_\sigma(1) \nonumber \\
    &= \re\Bg(\sum_{p \le X}\frac{\chi(p)}{p^\sigma} - \frac{1}{X^\sigma}\sum_{p \le X} \chi(p)\Bg)+O_\sigma(1).
\end{align}
Let \(W(\chi,X) :=\sum_{p \le X}\chi(p)\). Since \(q\) is fixed, the Siegel–Walfisz theorem gives
\[W(\chi,X) = o_q\Bg(\frac{X}{\log X}\Bg)\]
for every non-principal character \(\chi \pmod q\). By partial summation, 
\[\sum_{p \le X}\frac{\chi(p)}{p^\sigma} = \frac{W(\chi,X)}{X^\sigma} +\sigma \int_2^X \frac{W(\chi,u)}{u^{\sigma+1}}\mmd u = o_{\sigma,q}\Bg(\frac{X^{1-\sigma}}{\log X}\Bg).\]
Moreover, 
\[\frac{1}{X^\sigma}\sum_{p \le X}\chi(p) =o_{\sigma,q}\Bg(\frac{X^{1-\sigma}}{\log X}\Bg).\]
Substituting these estimates into \eqref{t2-logQchi}, we obtain
\[|Q_\sigma(\chi,X)| = \exp\Bg(o_{\sigma,q}\Bg(\frac{X^{1-\sigma}}{\log X}\Bg) \Bg).
\]
Since there are only finitely many characters modulo the fixed modulus \(q\), it follows that
\begin{align}
    \label{t2-Qchiupper}
    \Bgg|\sum_{\substack{\chi \pmod q \\ \chi \neq \chi_0}}\overline{\chi(a)} Q_\sigma(\chi,X)\Bgg| \le \exp\Bg(o_{\sigma,q}\Bg(\frac{X^{1-\sigma}}{\log X}\Bg) \Bg).
\end{align}
Comparing \eqref{t2-Qchiupper} with \eqref{t2-Qchi0}, we see that the contribution from the non-principal characters is negligible compared with that of the principal character. Therefore, by \eqref{t2-sumdes}, 
\begin{align}
\label{t2-final}
    \sum_{\substack{k \ge 1 \\ k \equiv a \pmod q}}\frac{r(k)}{k^\sigma} =\exp \Bg(\Bg(\frac{\sigma}{1-\sigma}+o_{\sigma,q}(1)\Bg)\frac{X^{1-\sigma}}{\log X}\Bg).
\end{align}

Then we deal with \(r(q)\). Since \(q\) is fixed and \(X\to\infty\), every prime divisor of \(q\) is at most \(X\) for sufficiently large \(T\). Write
\[q = \prod_{p^\nu \parallel q} p^\nu.\]
Then, by the definition of \(r(n)\),
\[r(q) =\prod_{p^\nu \parallel q} \Bg(1-\Bg(\frac{p}{X}\Bg)^\sigma\Bg)^\nu = 1+O_{q,\sigma}\Bg(\frac{1}{X^\sigma}\Bg).\]
In particular,
\begin{align}
    \label{t2-rqqsigma}
    r(q)q^\sigma = \exp\bg(O_{\sigma,q}(1)\bg).
\end{align}
Combining \eqref{t2-V2V1ratiolower2}, \eqref{t2-final} and \eqref{t2-rqqsigma}, we obtain
\begin{align}
    \label{t2-V2V1ratiolower3}
    \frac{V_2}{V_1} \ge \exp \Bg(\Bg(\frac{\sigma}{1-\sigma}+o_{\sigma,q}(1)\Bg)\frac{X^{1-\sigma}}{\log X}\Bg).
\end{align}
Substituting \eqref{t2-V2V1ratiolower3} into \eqref{t2-max2}, and using \eqref{t2-kappa}, we have
\[\max_{\sqrt{T}\le t\le T} |D(t)| \ge \exp \Bg(\Bg(\frac{\sigma}{1-\sigma}+o_{\sigma,q}(1)\Bg)\frac{X^{1-\sigma}}{\log X}\Bg).\]
Recalling that \(X=\kappa\log T\lo T \), we have
\[\frac{X^{1-\sigma}}{\log X} = \bg(\kappa^{1-\sigma}+o(1)\bg)\frac{(\log T)^{1-\sigma}}{(\lo T)^\sigma}.\]
It follows that 
\begin{align}
    \label{t2-max3}
    \max_{\sqrt{T}\le t\le T} |D(t)| \ge \exp \Bg(\Bg(\frac{\sigma}{1-\sigma}\kappa^{1-\sigma}+o_{\sigma,q}(1)\Bg) \frac{(\log T)^{1-\sigma}}{(\lo T)^\sigma}\Bg)
\end{align}
for sufficiently large \(T\).

Finally, recalling the approximation obtained at the beginning of the proof, that is, \eqref{t2-Euler4}, we conclude that
\[\max_{\sqrt{T} \le t \le T}\Bg|\zeta\Bg(\sigma+\mit,\frac{a}{q}\Bg)\Bg| \ge \exp\Bg( \Bg(\frac{\sigma}{1-\sigma}\kappa^{1-\sigma}+o_{\sigma,q}(1)\Bg) \frac{(\log T)^{1-\sigma}}{(\lo T)^\sigma}\Bg).\]
This completes the proof of Theorem \ref{thm2}.

\section{Proof of Theorem \ref{thm3}}
\label{sec-pthm3}

In this section, we prove Theorem \ref{thm3} by using the long resonance method in \cite{Aistleitner2019IMRN}. One of the main tools is a truncated approximation for the Hurwitz zeta function on the \(1\)-line, which follows from \eqref{fundamental-decomposition} together with a truncated Euler product for Dirichlet \(L\)-functions.

\subsection{An auxiliary lemma}
\label{t3-sec-lemma}

For \(y \ge 2\), define
\[L(s,\chi;y) = \prod_{p \le y}\Bg(1-\frac{\chi(p)}{p^s} \Bg)\inv =\sum_{\substack{k \ge 1 \\ P^+(k) \le y}} \frac{\chi(k)}{k^s}.\]
Then we have the following result.

\begin{lem}
    \label{t3-lem1}
Let \(q\) be fixed and let \((a,q)=1\). Set \(Y = \exp\bg((\log T)^{10}\bg)\). Then, uniformly for \(\sqrt{T}\le |t| \le T\), we have
\[\zeta\Bg(1+\mit,\frac{a}{q}\Bg) = \frac{q^{1+\mit}}{\phi(q)} \sum_{\chi\pmod q} \overline{\chi(a)}L(1+\mit,\chi;Y)+O_q\Bg(\frac{1}{(\log T)^{9}}\Bg).\]
\end{lem}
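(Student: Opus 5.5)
By \eqref{fundamental-decomposition} with \(s=1+\mit\), which holds by meromorphic continuation, we have
\[\zeta\Bg(1+\mit,\frac{a}{q}\Bg)=\frac{q^{1+\mit}}{\phi(q)}\sum_{\chi\pmod q}\overline{\chi(a)}L(1+\mit,\chi).\]
Since \(|q^{1+\mit}/\phi(q)|\ll_q 1\) and there are \(\phi(q)\) characters, it suffices to prove, for each fixed \(\chi\pmod q\) and uniformly for \(\sqrt T\le|t|\le T\),
\[L(1+\mit,\chi)=L(1+\mit,\chi;Y)+O_q\bg((\log T)^{-9}\bg).\]

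I will do this with the standard truncated Euler product argument, in the form used by Granville--Soundararajan and in \cite{Aistleitner2019IMRN}. The key input is a zero-free region. For \(\chi\) modulo a fixed \(q\), there are no zeros of \(L(s,\chi)\) in
\[\re s\ge 1-\frac{c_q}{\log(|\im s|+3)}.\]
The exceptional Siegel zero of a real character does not matter: since \(q\) is fixed, it is a fixed real number \(<1\), while our \(|t|\ge\sqrt T\) is large. With this region one has, for \(\sigma\ge 1-c_q/(2\log T)\) and \(|\im s|\asymp T\), the bounds
\[\frac{L'}{L}(s,\chi)\ll_q\log T,\qquad \log L(s,\chi)\ll_q \log_2 T.\]
Using these, I will derive the standard explicit-type estimate
\[\sum_{n\le x}\Lambda(n)\chi(n)n^{-\mit}\ll_q x\exp\bg(-c\sqrt{\log x}\bg)+\frac{x}{\log T}\]
for \(x\) in the relevant range. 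Since \(\log Y=(\log T)^{10}\), the second term is in fact much smaller: the zero-free region gives savings of order \(x^{-c_q/\log T}\), which equals \(\exp(-c_q(\log T)^9)\) at \(x=Y\).

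The steps, in order:

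\begin{enumerate}
\item[(1)] Write
\[\log L(1+\mit,\chi)-\log L(1+\mit,\chi;Y)=\sum_{n>Y}\frac{\Lambda(n)\chi(n)}{n^{1+\mit}\log n}\]
for \(\re s>1\). Extend this to \(\re s=1\) by Perron's formula: express the tail as a contour integral of \(\log L(1+\mit+w,\chi)\,Y^{w}\,\mmd w/w\) (with a smoothing if needed), and shift the contour to \(\re w=-c_q/(2\log T)\), staying within the zero-free region.

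\item[(2)] Bound the shifted integral by \(\log_2 T\cdot Y^{-c_q/(2\log T)}\cdot\log T=\exp(-c(\log T)^9+O(\log_2 T))\). Truncate the height of the contour at \(T^{O(1)}\) relative to \(t\). This stays legitimate because \(|t|\le T\) and \(L(s,\chi)\) grows at most polynomially by Lemma \ref{t1-lem1}\((\mathrm{\romannumeral 2})\)-type convexity bounds. A cleaner alternative is to partial-sum the prime number theorem in arithmetic progressions twisted by \(n^{-\mit}\), namely \(\sum_{p\le x}\chi(p)p^{-\mit}\log p\), over \(x>Y\).

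\item[(3)] Exponentiate. Since \(L(1+\mit,\chi)\ll_q\log T\) by \eqref{t1-lem1-bound2}, we get
\[L(1+\mit,\chi)=L(1+\mit,\chi;Y)\bg(1+O(\mme^{-c(\log T)^{9}})\bg),\]
and the error term is \(O(\log T\cdot \mme^{-c(\log T)^9})\), far below \((\log T)^{-9}\).

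\item[(4)] Substitute into the decomposition and sum over the finitely many characters.
\end{enumerate}

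The main obstacle is step (1)--(2). One must justify the passage from \(\re s>1\) to the \(1\)-line, and bound the tail \(\sum_{p>Y}\chi(p)p^{-1-\mit}\) uniformly in \(t\in[\sqrt T,T]\). The tail is what requires the zero-free region in the \(t\)-aspect for fixed modulus, uniformly over all \(\chi\pmod q\), including the principal character, where \(L(s,\chi_0)=\zeta(s)\prod_{p\mid q}(1-p^{-s})\) and the pole at \(s=1\) is harmless because \(|t|\ge\sqrt T\).

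My first attempt will be the Perron route, with the integrand \(\log L\) on a truncated contour of height \(T^2\). If the error terms from truncation are awkward, I will instead use the de la Vallée Poussin zero-free region directly to obtain
\[\sum_{p\le x}\chi(p)p^{-\mit}\log p\ll x\exp\bg(-c\log x/\log T\bigr)+x\exp\bg(-c\sqrt{\log x}\bg),\]
and then conclude by partial summation from \(Y\) to \(\infty\).
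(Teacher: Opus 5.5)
Your top-level reduction is the same as the paper's. You apply \eqref{fundamental-decomposition} at $s=1+\mit$, approximate each $L(1+\mit,\chi)$ multiplicatively by $L(1+\mit,\chi;Y)$, and pass to an additive error using $L(1+\mit,\chi)\ll_q\log T$. The only difference is that the paper does not prove the per-character approximation. It quotes Dixit--Mahatab \cite[Lemma 3.1]{Dixit2021BAustMS}, i.e.\ $L(1+\mit,\chi)=L(1+\mit,\chi;Y)\bigl(1+O_q((\log T)^{-10})\bigr)$ for $T^{1/10}\le|t|\le T$. You instead rederive it from the classical zero-free region for the fixed modulus, via Perron's formula for $\log L$; you are right that the Siegel zero is irrelevant. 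This is the standard proof of such lemmas. It makes the argument self-contained and gives a much smaller error, of size $T^{-1/2+o(1)}$ from the Perron truncation. That is not $e^{-c(\log T)^9}$ as you wrote, unless you smooth, but it is far better than the $(\log T)^{-10}$ the paper needs. The citation buys a three-line proof.

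Three points in your sketch need adjusting.

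\textbf{Contour height.} Take the Perron contour of height $U\le|t|/2$, not $T^2$. Since $|t|\ge\sqrt T$, the truncation error $\ll(\log T)^{O(1)}/U$ is already $O(T^{-1/2+o(1)})$. Then $\log L(1+\mit+w,\chi)$ is only evaluated at heights in $[|t|/2,3|t|/2]$, where the zero-free region and your bounds $\log L\ll\log\log T$, $L'/L\ll\log T$ apply. With height $T^2$ the rectangle would contain the logarithmic singularity of $\log L(1+\mit+w,\chi_0)$ at $w=-\mit$, so the contour shift would need a slit.

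\textbf{Principal character in the partial-summation variant.} The bound $\sum_{p\le x}\chi(p)p^{-\mit}\log p\ll x\exp(-c\log x/\log T)+x\exp(-c\sqrt{\log x})$ is false for $\chi=\chi_0$, because of the main term $x^{1-\mit}/(1-\mit)$. Bounding that term, or your earlier $x/\log T$ term, in absolute value leads to a tail integral of the form $\int_Y^\infty\mathrm{d}u/(u\log u)$, which diverges. You must keep the main term and use its oscillation: its contribution is $\int_Y^\infty u^{-\mit}\,\mathrm{d}u/(u\log u)\ll 1/(|t|\log Y)$ by integration by parts.

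\textbf{Range of the tail sum.} $\log L(s,\chi)-\log L(s,\chi;Y)$ is the sum over $p^k$ with $p>Y$, not over all $n>Y$. The discrepancy is $O(Y^{-1/2})$, so this is harmless.

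With these repairs your argument is correct.
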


\begin{proof}
By the truncated Euler product estimate for Dirichlet \(L\)-functions, we have, uniformly for all characters \(\chi\pmod q\),
\[L(1+\mit,\chi) = L(1+\mit,\chi;Y) \Bg(1+O_q\Bg(\frac{1}{(\log T)^{10}}\Bg)\Bg)\]
whenever \(T^{1/10} \le |t| \le T\); see Dixit and Mahatab \cite[Lemma 3.1]{Dixit2021BAustMS}. In particular, this holds in the range \(\sqrt{T}\le |t| \le T\). Moreover, for fixed \(q\), the following classical upper bound holds uniformly in this range:
\[L(1+\mit,\chi) \ll_q \log T.\]
It also follows that
\[L(1+\mit,\chi;Y) \ll_q \log T\]
and
\[L(1+\mit,\chi)=L(1+\mit,\chi;Y)+O_q\Bg(\frac{1}{(\log T)^9}\Bg).\]
Substituting this estimate into \eqref{fundamental-decomposition}, and noting that the number of characters modulo the fixed modulus \(q\) is \(O_q(1)\), we obtain
\[\zeta\Bg(1+\mit,\frac{a}{q}\Bg) = \frac{q^{1+\mit}}{\phi(q)} \sum_{\chi\pmod q} \overline{\chi(a)}L(1+\mit,\chi;Y)+O_q\Bg(\frac{1}{(\log T)^{9}}\Bg).\]
This completes the proof.
\end{proof}

\subsection{Proof of Theorem \ref{thm3}}
\label{t3-sec-prove}

We now turn to the proof of Theorem \ref{thm3}. Following the long resonance method, for large \(T\), let \(X=(\log T \lo T)/6\). Define \(r(n)\) to be a completely multiplicative function whose values at primes are given by
\begin{equation*}
    r(p) =
    \begin{cases}
        1- p/X, ~ &\operatorname{if} p \le X, \\
		0, ~ &\operatorname{if} p > X.
    \end{cases}
\end{equation*}
Furthermore, define the resonator
\[R(t) := \prod_{p \le X}\bg(1-r(p)p^{\mit}\bg)\inv = \sum_{n\ge 1} r(n)n^{\mit}.\]
Since
\[|1-r(p)p^{\mit}| \ge 1-r(p) = \frac{p}{X},\]
the prime number theorem gives
\begin{align}
    \label{t3-Rupper}
    |R(t)|^2 \le \exp\Bg(2\sum_{p \le X} (\log X-\log p)\Bg) \le T^{1/3+o(1)}.
\end{align}

Throughout this section, let \(\Phi\) be the Gaussian function introduced in Section \ref{sec-pthm1}. Similarly, the rapid decay of \(\Phi\) ensures the absolute convergence of all sums and integrals occurring below. For \(y \ge 2\), define
\[M(s;y) := M(s,a,q;y)= \frac{q^s}{\phi(q)} \sum_{\chi \pmod q}\overline{\chi(a)} L(s,\chi;y) .\]
Put \(Y=\exp\bg((\log T)^{10}\bg)\). By Lemma \ref{t3-lem1}, uniformly for \(\sqrt{T} \le |t| \le T\), 
\begin{align}
    \label{t3-zetaM}
    \zeta\Bg(1+\mit,\frac{a}{q}\Bg) = M(1+\mit;Y)+ O_q\Bg(\frac{1}{(\log T)^{9}}\Bg).
\end{align}
To apply the long resonance method to \(M(1+\mit;Y)\), we introduce the following four integrals:
\begin{align*}
    &I_1 :=I_1(R,T) = \int_{\sqrt{T}}^T |R(t)|^2 \Phi\Bg(\frac{\log T}{T}t\Bg) \mdt, \\
    &J_1 :=J_1(R,T) = \int_\mbr |R(t)|^2 \Phi\Bg(\frac{\log T}{T}t\Bg) \mdt, \\
    &I_2(y) :=I_2(R,T;y) =\int_{\sqrt{T}}^T M(1+\mit;y) |R(t)|^2 \Phi\Bg(\frac{\log T}{T}t\Bg) \mdt, \\
    &J_2(y) :=J_2(R,T;y) =\int_\mbr M(1+\mit;y) |R(t)|^2 \Phi\Bg(\frac{\log T}{T}t\Bg) \mdt.
\end{align*}
Since \(I_1>0\), we have
\begin{align}
    \label{t3-max1}
    \max_{\sqrt{T}\le t\le T} |M(1+\mit;Y)| \ge \frac{|I_2(Y)|}{I_1} \ge \frac{\re I_2(Y)}{I_1}.
\end{align}

We now extend the range of integration to the entire real line. By the definition of \(L(s,\chi;Y)\) and Mertens' theorem, uniformly for all \(t \in \mbr\),
\[|L(1+\mit,\chi;Y)|\le \prod_{p \le Y}\Bg(1-\frac{1}{p}\Bg)\inv\ll\log Y \ll (\log T)^{10}.\]
Thus,
\begin{align}
    \label{t3-Mupper}
    M(1+\mit;Y)\ll_q (\log T)^{10}.
\end{align}
Combining \eqref{t3-Mupper} with \eqref{t3-Rupper} implies
\[\Big| \int_{|t|\le \sqrt{T}} M(1+\mit;Y) |R(t)|^2\Phi\Big(\frac{\log T}{T}t \Big)\mmd t\Big| \ll T^{5/6+o(1)}.\]
Moreover, by the rapid decay of \(\Phi(t)\),
\[\Big| \int_{|t|\ge T } M(1+\mit;Y) |R(t)|^2 \Phi\Big(\frac{\log T}{T}t \Big)\mmd t\Big| \ll 1.\]
Since \(M(1-\mit;Y)=\overline{M(1+\mit;Y)}\) and \(\Phi\) is even, while \(|R(-t)|^2=|R(t)|^2\), the contributions from \([-T,-\sqrt T]\) and \([\sqrt T,T]\) are conjugate to each other. Hence, the above estimates give
\begin{align}
    \label{t3-I2J2eq}
    2\re I_2(Y) = J_2(Y) + O\big(T^{5/6+o(1)}\big).
\end{align}
Similarly, we have
\[\int_{|t|\le \sqrt{T}} |R(t)|^2\Phi\Big(\frac{\log T}{T}t \Big)\mmd t \ll T^{5/6+o(1)},\]
while the contribution from \(|t|\ge T\) is \(O(1)\). Therefore, 
\begin{align}
    \label{t3-I1J1eq}
    2I_1=J_1+O\big(T^{5/6+o(1)}\big).
\end{align}
By the same argument as in \cite[Eq. (8)]{Aistleitner2019IMRN}, we have the following lower bound for \(J_1\):
\begin{align}
    \label{t3-I1lower}
    J_1 = \int_\mbr \sum_{m,n\ge 1}r(m)r(n) \Bg(\frac{m}{n}\Bg)^{\mit}\Phi\Bg(\frac{\log T}{T}t\Bg) \mdt  \gg T^{1+o(1)}.
\end{align}
Moreover, \eqref{t3-Mupper} gives
\[|J_2(Y)|\ll_q (\log T)^{10}J_1.\]
Hence, by \eqref{t3-max1}, \eqref{t3-I2J2eq}, \eqref{t3-I1J1eq} and \eqref{t3-I1lower}, we obtain
\begin{align}
    \label{t3-max2}
    \max_{\sqrt{T}\le t \le T} |M(1+\mit;Y)| \ge \frac{J_2(Y)}{J_1}+ O\big(T^{-1/6+o(1)}\big).
\end{align}

Since the completely multiplicative function \(r(n)\) appearing in the resonator \(R(t)\) is supported on the set \(\{k:P^+(k) \le X\}\), the range of primes in \(J_2(Y)\) can be restricted to \(p \le X\). To this end, using the orthogonality of Dirichlet characters, we rewrite \(M(s;y)\) as follows:
\[M(s;y)=\frac{q^s}{\phi(q)} \sum_{\chi \pmod q}\overline{\chi(a)} \sum_{\substack{k \ge 1 \\ P^+(k) \le y}} \frac{\chi(k)}{k^s} = q^s \sum_{\substack{k \ge 1,P^+(k) \le y\\ k \equiv a \pmod q}} \frac{1}{k^s}.\]
Expanding \(M(1+\mit;y)\) and \(|R(t)|^2\), and interchanging the order of summation and integration, we obtain
\begin{align}
\label{t3-J2equation}
    J_2(y) &=q\sum_{\substack{k \ge 1,P^+(k) \le y\\ k \equiv a \pmod q}} \frac{1}{k}\sum_{m,n\ge1}r(m)r(n) \int_\mbr \Bg(\frac{qm}{kn}\Bg)^{\mit}\Phi\Bg(\frac{\log T}{T}t\Bg)\mmd t \nonumber \\
    &= q\frac{T}{\log T} \sum_{\substack{k \ge 1,P^+(k) \le y\\ k \equiv a \pmod q}} \frac{1}{k} \sum_{m,n\ge1}r(m)r(n) \widehat{\Phi}\Bg( \frac{T}{\log T}\log \frac{qm}{kn}\Bg).
\end{align}
Note that \(r(n)\ge 0\) and \(\whp(\xi)>0\) for all \(\xi\in\mbr\), every term in \eqref{t3-J2equation} is non-negative. Since \(X \le Y\) and 
\[\{k:P^+(k) \le X\} \subset \{k:P^+(k) \le Y\},\]
it follows that \(J_2(Y) \ge J_2(X)\).
Thus, combining this with \eqref{t3-max2}, we obtain
\begin{align}
    \label{t3-max3}
        \max_{\sqrt{T}\le t\le T} |M(1+\mit;Y)| \ge \frac{J_2(X)}{J_1}+ O\big(T^{-1/6+o(1)}\big).
\end{align}

It remains to obtain an effective lower bound for \(J_2(X)/J_1\). From \eqref{t3-J2equation} with \(y=X\), and using again the non-negativity of all terms, we may retain only the terms satisfying \(m=kd \) and \(n=q\ell\) with \(d,\ell \ge 1\). Hence, we get

\begin{align*}
    J_2(X) &\ge q \sum_{\substack{k \ge 1, P^+(k) \le X\\ k \equiv a \pmod q}} \frac{1}{k}\sum_{d,\ell \ge1} r(kd)r(q\ell)\int_\mbr \Bg(\frac{qkd}{kq\ell}\Bg)^{\mit}\Phi\Bg(\frac{\log T}{T}t\Bg) \mdt  \\
    &=qr(q) \sum_{\substack{k \ge 1, P^+(k) \le X\\ k \equiv a \pmod q}} \frac{r(k)}{k} \sum_{d,\ell \ge 1}r(d)r(\ell) \int_\mbr\Bg(\frac{d}{\ell}\Bg)^{\mit}\Phi\Bg(\frac{\log T}{T}t\Bg) \mdt.
\end{align*}
Here, in the last step, we use the fact that \(r(n)\) is completely multiplicative. By the definition of \(J_1\), we have
\[J_2(X) \ge qr(q) \sum_{\substack{k \ge 1, P^+(k) \le X\\ k \equiv a \pmod q}} \frac{r(k)}{k}J_1. \]
Thus,
\[\frac{J_2(X)}{J_1}\ge q r(q) \sum_{\substack{k \ge 1, P^+(k) \le X\\ k \equiv a \pmod q}}\frac{r(k)}{k}.\]
By the orthogonality of Dirichlet characters,
\[\sum_{\substack{k \ge 1, P^+(k) \le X\\ k \equiv a \pmod q}}\frac{r(k)}{k} = \frac{1}{\phi(q)}\sum_{\chi \pmod q}\overline{\chi(a)}P(\chi,X),\]
where
\[P(\chi,X) := \sum_{\substack{k \ge 1 \\ P^+(k) \le X}} \frac{\chi(k)r(k)}{k} = \prod_{p \le X}\Bg(1-\frac{\chi(p)r(p)}{p}\Bg)\inv.\]
Thus,
\begin{align}
    \label{t3-J2J1lower}
    \frac{J_2(X)}{J_1}\ge \frac{qr(q)}{\phi(q)} \sum_{\chi \pmod q}\overline{\chi(a)}P(\chi,X).
\end{align}

Since \((a,q)=1\), we have \(\chi_0(a)=1\). Hence, the right-hand side of \eqref{t3-J2J1lower} can be written as
\begin{align}
    \label{t3-decomposition}
    \frac{qr(q)}{\phi(q)} \bigg( P(\chi_0,X)+\sum_{\substack{\chi \pmod q \\ \chi \neq \chi_0}}\overline{\chi}(a) P(\chi,X) \bigg).
\end{align}
We first consider the contribution from the principal character. By the definition of \(r(n)\),
\begin{align}
    \label{t3-Pchi0eq}
    P(\chi_0,X) &= \prod_{\substack{p \le X \\ p \nmid q}}\Bg(1-\frac{r(p)}{p}\Bg)\inv =\prod_{\substack{p \le X \\ p \nmid q}} \Bg(1-\frac{1}{p}+\frac{1}{X}\Bg)\inv   \nonumber \\ 
    &= \prod_{\substack{p \le X \\ p \nmid q}} \Bg(1-\frac{1}{p}\Bg)\inv  \prod_{\substack{p \le X \\ p \nmid q}} \Bg(1+\frac{1}{X(1-1/p)}\Bg)\inv.
\end{align}
Since \(q\) is fixed, all prime divisors of \(q\) are at most \(X\) for sufficiently large \(T\). Using Mertens' theorem shows that
\begin{align}
    \label{t3-p1eq}
    \prod_{\substack{p \le X \\ p \nmid q}} \Bg(1-\frac{1}{p}\Bg)\inv =\prod_{p \le X} \Bg(1-\frac{1}{p}\Bg)\inv  \prod_{p \mid q}  \Bg(1-\frac{1}{p}\Bg) 
    = \frac{\phi(q)}{q}\mme^\gamma \log X + O_q(1).
\end{align}
On the other hand,
\[\log \prod_{\substack{p \le X \\ p \nmid q}}  \Bg(1+\frac{1}{X(1-1/p)}\Bg)\inv \ll \sum_{p \le X}\frac{1}{X}.\]
The prime number theorem gives that
\begin{align}
    \label{t3-p2eq}
    \prod_{\substack{p \le X \\ p \nmid q}}  \Bg(1+\frac{1}{X(1-1/p)}\Bg)\inv =1+O\Bg(\frac{1}{\log X}\Bg).
\end{align}
Combining \eqref{t3-Pchi0eq}, \eqref{t3-p1eq} and \eqref{t3-p2eq}, it follows that
\begin{align}
    \label{t3-pchi0estimate}
    P(\chi_0,X)=\frac{\phi(q)}{q}\mme^\gamma \log X + O_q(1).
\end{align}
For the non-principal Dirichlet characters \(\chi \pmod q\), by the definition of \(r(n)\), we have
\begin{align}
    \label{t3-Pchieq}
    P(\chi,X) &= \prod_{p \le X}\Bg(1-\frac{\chi(p)}{p}+\frac{\chi(p)}{X}\Bg)\inv \nonumber \\
    & =  \prod_{p \le X}\Bg(1-\frac{\chi(p)}{p}\Bg)\inv  \prod_{p \le X}\Bg(1+\frac{\chi(p)}{X(1-\chi(p)/p)}\Bg)\inv.
\end{align}
It follows from the trivial bound 
\[\Bg|1-\frac{\chi(p)}{p}\Bg| \ge 1-\frac{1}{p} \ge \frac{1}{2}\]
that
\[\Bg|\frac{\chi(p)}{X(1-\chi(p)/p)}\Bg| \le \frac{2}{X}.\]
Therefore, using the prime number theorem, we get
\[\log\prod_{p \le X}\Bg(1+\frac{\chi(p)}{X(1-\chi(p)/p)}\Bg)\inv \ll \frac{1}{\log X}, \]
that is,
\begin{align}
    \label{t3-p4eq}
    \prod_{p \le X}\Bg(1+\frac{\chi(p)}{X(1-\chi(p)/p)}\Bg)\inv = 1+O\Bg(\frac{1}{\log X}\Bg).
\end{align}
On the other hand, since \(\chi\) is non-principal and \(q\) is fixed, the partial Euler product satisfies 
\[ \prod_{p \le X}\Bg(1-\frac{\chi(p)}{p}\Bg)\inv  = L(1,\chi) +o_q(1).\]
Combining this with \eqref{t3-Pchieq} and \eqref{t3-p4eq} yields 
\[P(\chi,X) = L(1,\chi) +o_q(1) =O_q(1).  \]
Since there are only finitely many non-principal characters modulo \(q\), and \(|\chi(a)|=1\), it follows that
\begin{align}
    \label{t3-Pchiestimate}
    \sum_{\substack{\chi \pmod q \\ \chi \neq \chi_0}}\overline{\chi(a)} P(\chi,X) = O_q(1).
\end{align}

Finally, we deal with \(r(q)\). Since \(q\) is fixed, we can write
\[q = \prod_{p^\nu \parallel q} p^\nu.\]
For sufficiently large \(T\), every prime divisor of \(q\) is at most \(X\), and hence
\begin{align}
\label{t3-rqeq}
    r(q) =\prod_{p^\nu \parallel q} \Bg(1-\frac{p}{X}\Bg)^\nu = 1+O_q\Bg(\frac{1}{X}\Bg).
\end{align}
Combining \eqref{t3-J2J1lower}, \eqref{t3-decomposition}, \eqref{t3-pchi0estimate}, \eqref{t3-Pchiestimate} and \eqref{t3-rqeq} gives that
\[\frac{J_2(X)}{J_1} \ge \mme^\gamma \log X -C_q\]
for some positive constant \(C_q\).

Recalling \eqref{t3-max3}, we obtain
\[\max_{\sqrt{T}\le t \le T}|M(1+\mit;Y)|\ge \mme^\gamma \log X -C_q.\]
Since \(X=(\log T \lo T)/6\), we have
\[\max_{\sqrt{T}\le t \le T} |M(1+\mit;Y)| \ge \mme^\gamma(\lo T+\lt T)-C_q.\]
Finally, employing \eqref{t3-zetaM} gives 
\[\max_{\sqrt{T}\le t\le T}\Bg|\zeta\Bg(1+\mit,\frac{a}{q}\Bg)\Bg| \ge \mme^\gamma (\lo T+\lt T) -C_{q}.\]
This completes the proof of Theorem \ref{thm3}.

\section*{Acknowledgments}
Qiyu Yang was supported by the Natural Science Foundation of Henan Province (Grant No. 252300421782) and the National Natural Science Foundation of China (Grant No. 12601011). Guang-Liang Zhou was supported by the National Natural Science Foundation of China (Grant No. 12401009).
	
	\bibliographystyle{siam}
    \bibliography{reference}

@article{Aistleitner2016MathAnn,
  author = {C. Aistleitner},
  title = {Lower bounds for the maximum of the {R}iemann zeta function along vertical lines},
  journal = {Math. Ann.},
  volume = {365},
  number = {1-2},
  pages = {473--496},
  year = {2016}
}

@article{Aistleitner2019IMRN,
  title={Extreme values of the {R}iemann zeta function on the 1-line},
  author={Aistleitner, Christoph and Mahatab, Kamalakshya and Munsch, Marc},
  journal={Int. Math. Res. Not. IMRN},
  volume={2019},
  number={22},
  pages={6924--6932},
  year={2019}
}

@article{Aistleitner2019QJMath,
  author = {C. Aistleitner and K. Mahatab and M. Munsch and A. Peyrot},
  title = {On large values of \({L}(\sigma, \chi)\)},
  journal = {Q. J. Math.},
  volume = {70},
  number = {3},
  pages = {831--848},
  year = {2019}
}

@book{Apostol1976,
    author = {T. M. Apostol},
    title = {Introduction to {A}nalytic {N}umber {T}heory},
    series = {Undergraduate Texts in Mathematics},
      publisher = {Springer-Verlag},
  address = {New York},
  year = {1976}
}

@article{Balasubrmanian1977PIAS,
    author = {R. Balasubramanian and K. Ramachandra},
    title = {On the frequency of {T}itchmarsh's phenomenon for $\zeta(s)$-$\mathrm{\uppercase\expandafter{\romannumeral 3}}$},
    journal = {Proc. Indian Acad. Sci.},
    year = {1977},
    volume = {86 A},
    pages = {341--351}
}

@article{Bondarenko2023BLMS,
  title={A dichotomy for extreme values of zeta and {D}irichlet {$L$}-functions},
  author={A. Bondarenko and P. Darbar and M. V. Hagen and W. Heap and K. Seip},
  journal={Bull. Lond. Math. Soc.},
  volume={55},
  number={6},
  pages={2963--2975},
  year={2023}
}

@article{Bondarenko2017DukeMJ,
  author = {A. Bondarenko and K. Seip},
  title = {Large greatest common divisor sums and extreme values of the {R}iemann zeta function},
  journal = {Duke Math. J.},
  volume = {166},
  number = {9},
  pages = {1685--1701},
  year = {2017}
}

@article{Bondarenko2018MathAnn,
    author = {Bondarenko, A and Seip, K},
    title = {Extreme values of the {R}iemann zeta function and its argument},
    journal = {Math. Ann.},
    volume = {372},
    number = {3-4},
    pages = {999--1015},
    year = {2018}
}

@article{Bondarenko2018operTAA,
  title={Note on the resonance method for the {R}iemann zeta function},
  author={Bondarenko, Andriy and Seip, Kristian},
  journal = {Oper. Theory Adv. Appl.},
  volume = {261},
  pages={121--139},
  year={2018}
}

@article{delaBreteche2019PLMS,
    author = {R. de la Bret{\`e}che and G. Tenenbaum},
    title = {Sommes de {G}\'al et applications},
    journal = {Proc. Lond. Math. Soc.},
    volume={119},
    pages={104--134},
    year = {2019}
}

@article{Dixit2021BAustMS,
    author = {Dixit, A. B. and  Mahatab, K.},
    title = {Large values of ${L}$-functions on the $1$-line},
    journal = {Bull. Aust. Math. Soc.},
    year = {2021},
    volume = {103},
    number ={2},
    pages = {230--243}
}

@misc{Dong2022phd,
  author = {Z. Dong},
  title = {Distribution of values of the {R}iemann zeta function},
  howpublished = {Université Paris-Est Créteil Val-de-Marne - Paris 12},
  year = {2022}
}

@article{Heap2025Crelle,
    author = {W. Heap and A. Sahay},
    title = {The fourth moment of the {H}urwitz zeta function},
    journal = {J. reine angew. Math.},
    year = {2025},
    volume ={818},
    pages ={291--319}
}

@article{Zhli2026JAustMS,
  title={Extreme values of derivatives of the {D}edekind zeta function of a cyclotomic field},
  author={Li, Zhonghua and Song, Yutong and Yang, Qiyu and Zhao, Shengbo},
  journal={J. Aust. Math. Soc.},
  pages={1--27},
  year={2026},
  publisher={Cambridge University Press}
}

@article{Montgomery1977CommentMH,
    author={Montgomery, Hugh L.},
    title ={Extreme values of the {R}iemann zeta function},
    journal ={Comment. Math. Helv.},
    volume={52},
    page={511--518},
    year = {1977}
}

@article{Rademacher1959MathZ,
    author = {H. Rademacher},
    title = {On the {P}hragm\'en-{L}indel\"of theorem and some applications},
    journal = {Math. Z.},
    year = {1959},
    volume = {72},
    pages ={192--204}
}

@article{Ramachandra1989ArchMath,
    author = {K. Ramachandra and A. Sankaranarayanan},
    title = {Omega-theorems for the {H}urwitz zeta-function},
    journal = {Arch. Math.},
    year = {1989},
    volume = {53},
    pages = {469--481}
}

@article{Ramachandra1991AA,
    author = {Kanakanahalli Ramachandra and Ayyadurai Sankaranarayanan},
    title = {Note on a paper by {H}. {L}. {M}ontgomery-$\mathrm{\uppercase\expandafter{\romannumeral 2}}$},
    volume={58},
  pages={299--308},
  year={1991},
  journal = {Acta Arith.}
}

@article{Rane1980JLMS,
    author = {V. V. Rane},
    title = {On the mean square value of {D}irichlet {$L$}-series},
    journal = {J. Lond. Math. Soc.},
    year = {1980},
    volume = {21},
    number = {2},
    pages = {203--215}
}

@article{Sahay2023MPCambridgePS,
    author = {A. Sahay},
    title = {Moments of the {H}urwitz zeta function on the critical line},
    journal = {Math. Proc. Cambridge Philos. Soc.},
    year = {2023},
    volume = {174},
    number ={3},
    pages ={631--661}
}

@article{Soundararajan2008MathAnn,
  author = {K. Soundararajan},
  title = {Extreme values of zeta and {$L$}-functions},
  journal = {Math. Ann.},
  volume = {342},
  number = {2},
  pages = {467--486},
  year = {2008}
}

@article{Voronin1988IANSSSR,
  author = {S. M. Voronin},
  title = {Lower bounds in {R}iemann zeta-function theory},
  journal = {Izv. Akad. Nauk SSSR Ser. Mat.},
  volume = {52},
  number = {4},
  pages = {882--892, 896},
  year = {1988}
}

@article{Ddyang2022Mathematika,
  author = {D. Yang},
  title = {Extreme values of derivatives of the {R}iemann zeta function},
  journal = {Mathematika},
  volume = {68},
  number = {2},
  pages = {486-510},
  year = {2022}
}

@article{Ddyang2023arxiv,
  title = {Omega theorems for logarithmic derivatives of zeta and ${L}$-functions},
  author = {Daodao Yang},
  journal = {Preprint, arXiv:2311.16371},
  year = {2023}
}

@article{Ddyang2024BLMS,
  title={Extreme values of derivatives of zeta and {$L$}-functions},
  author={D. Yang},
  journal={Bull. Lond. Math. Soc.},
  volume={56},
  number={1},
  pages={79--95},
  year={2024}
}

@article{Qyyang2024JNT,
    title = {Large values of $\zeta(s)$ for $1/2<${R}e$(s)<1$},
journal = {J. Number Theory},
volume = {254},
pages = {199-213},
year = {2024},
author ={Qiyu Yang}
}
\end{document}